\numberwithin{equation}{section}
\newtheorem{theorem}{Theorem}[section]
\newtheorem{lemma}[theorem]{Lemma}
\newtheorem{proposition}[theorem]{Proposition}
\newtheorem{corollary}[theorem]{Corollary}
\renewenvironment{itemize}{%
\begin{list}{$\bullet$\rule[-0.25em]{0pt}{1.5em}}%
 	{\setlength\leftmargin{1.8em}}%
     \setlength{\labelsep}{0.45em}
	}%
{\end{list}}
\newcommand{\cE}{{\ensuremath{\mathcal E}} }
\newcommand{\cI}{{\ensuremath{\mathcal I}} }
\newcommand{\cN}{{\ensuremath{\mathcal N}} }
\newcommand{\cP}{{\ensuremath{\mathcal P}} }
\newcommand{\ga}{\alpha}
\newcommand{\gb}{\beta}
\newcommand{\gd}{\delta}
\newcommand{\gep}{\varepsilon}       
\newcommand{\gl}{\lambda}
\newcommand{\gs}{\sigma}
\renewcommand{\tilde}{\widetilde}          
\DeclareMathSymbol{\leqslant}{\mathalpha}{AMSa}{"36} 
\DeclareMathSymbol{\geqslant}{\mathalpha}{AMSa}{"3E} 
\DeclareMathSymbol{\eset}{\mathalpha}{AMSb}{"3F}     
\newcommand{\dd}{\text{\rm d}}             
\DeclareMathOperator*{\union}{\bigcup}       
\DeclareMathOperator*{\inter}{\bigcap}       
\title{Depinning of a polymer in a multi-interface medium}
\author{Francesco Caravenna}
\address{Dipartimento di Matematica Pura e Applicata, Universit\`a
degli Studi di Padova, via \mbox{Trieste} 63, 35121 Padova, Italy}
\email{francesco.caravenna\@@math.unipd.it}
\author{Nicolas P\'etr\'elis}
\address{Eurandom, P.O. Box 513, 5600 MB Eindhoven, The Netherlands.}
\email{petrelis\@@eurandom.tue.nl}
\keywords{Polymer Model, Pinning Model, Random Walk, Renewal Theory,
Localization/Delocalization Transition}
\subjclass[2000]{60K35, 60F05, 82B41}
\date{\today}
\newcommand{\R}{\mathbb{R}}
\newcommand{\Z}{\mathbb{Z}}
\newcommand{\N}{\mathbb{N}}
\DeclareMathOperator{\var}{Var}
\def\bP{\ensuremath{\bs{\mathrm{P}}}}
\def\bE{\ensuremath{\bs{\mathrm{E}}}}
\newcommand{\ind}{\bs{1}}
\def\bs{\boldsymbol}
\begin{document}

\begin{abstract}
In this paper we consider a model which
describes a polymer chain interacting with an infinity of
equi-spaced linear interfaces. The distance between two consecutive
interfaces is denoted by $T = T_N$ and is
allowed to grow with the size~$N$ of the polymer.
When the polymer receives a positive reward
for touching the interfaces,
its asymptotic behavior has been derived in \cite{cf:CP},
showing that a transition occurs when $T_N \approx \log N$.
In the present paper, we deal with the so--called
{\sl depinning case}, i.e., the polymer
is repelled rather than attracted by the interfaces.
Using techniques from renewal theory,
we determine the scaling behavior of the model for
large~$N$ as a function of $\{T_N\}_{N}$, showing that
two transitions occur, when $T_N \approx  N^{1/3}$ and
when $T_N \approx \sqrt{N}$ respectively.
\end{abstract}

\maketitle


\medskip

\section{Introduction and main results}


\smallskip
\subsection{The model}

We consider a $(1+1)$-dimensional model of a polymer
depinned at an infinity of equi-spaced horizontal
interfaces. The possible configurations of the polymer
are modeled by the trajectories of the simple random walk $(i,S_i)_{i\geq 0}$, where
$S_0=0$ and
$(S_i-S_{i-1})_{i \geq 1}$ is an i.i.d. sequence of symmetric Bernouilli trials taking values $1$ and $-1$, that is
$P(S_i-S_{i-1}=+1) = P(S_i-S_{i-1}=-1) = \frac 12$.
The polymer receives an energetic penalty $\delta<0$ each times it touches
one of the horizontal interfaces located at heights $\{k T\colon k\in\mathbb{Z}\}$, where $T\in 2\mathbb{N}$
(we assume that $T$ is even for notational convenience).
More precisely, the polymer interacts
with the interfaces through the following Hamiltonian:
\begin{equation}\label{eq:H}
H^T_{N,\delta}(S) \;:= \; \delta\, \sum_{i=1}^{N} \ind_{\{S_i \,\in\, T\Z\}}
\;=\;\delta\, \sum_{k\in\mathbb{Z}}\sum_{i=1}^{N} \ind_{\{S_i\,=\,k\, T\}},
\end{equation}
where $N \in \N$ is the number of monomers constituting the polymer.
We then introduce the
corresponding polymer measure $\bP^T_{N,\gd}$
(see Figure~\ref{fig:1} for a graphical description) by
\begin{equation}\label{eq:model}
\frac{\dd \bP^T_{N,\delta}}{\dd P}(S) \;:=\;
\frac{\exp\big(H^T_{N,\delta}(S)\big)}{Z^T_{N,\delta}},
\end{equation}
where the normalizing constant
$Z^T_{N,\delta} = E[\exp(H^T_{N,\delta}(S))]$ is called the {\sl partition function}.

\begin{figure}[t]
\includegraphics[width=.84\textwidth]{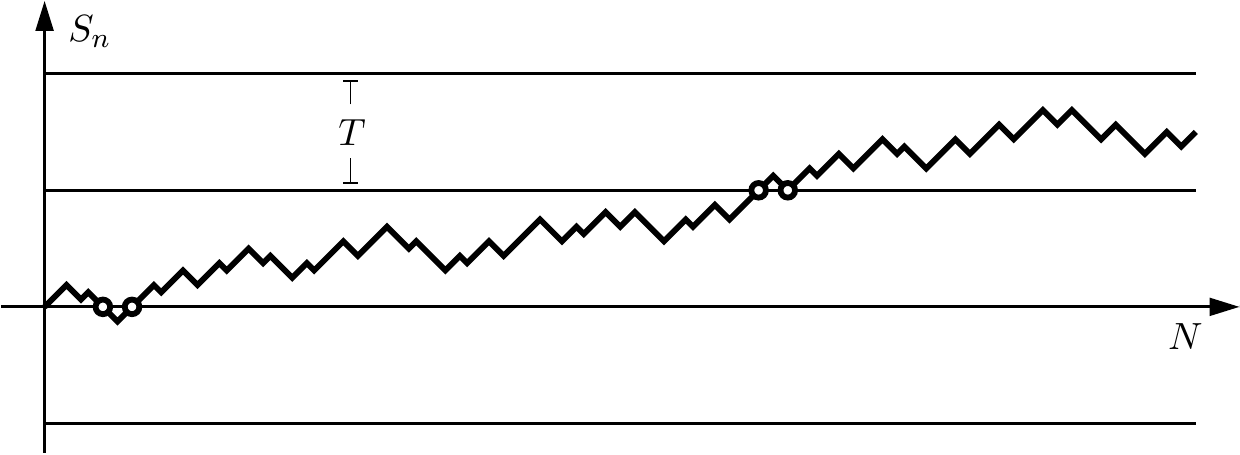}
\caption{A typical path of $\{S_n\}_{0 \le n \le N}$
the polymer measure $\bP^T_{N,\delta}$, for $N=158$
and $T=16$.
The circles indicate the points where the polymer
touches the interfaces, that are penalized by $\gd < 0$ each.}
\label{fig:1}
\end{figure}

We are interested in the case where the interface spacing
$T=\{T_N\}_{N\geq 1}$ is allowed to vary with the size $N$ of the polymer.
More precisely, we aim at understanding whether and how the
asymptotic behavior of the polymer is modified
by the interplay between the energetic penalty $\delta$ and the growth rate of $T_N$
as $N \to \infty$.
In the {\sl attractive case} $\delta>0$, when the polymer is rewarded
rather than penalized to touch an interface, this question
was answered in depth in a previous paper \cite{cf:CP},
to which we also refer for a detailed discussion on the motivation
of the model and for an overview on the literature (see also \S\ref{sec:slit} below).
In the present paper we extend the analysis to the {\sl repulsive case}
$\gd < 0$, showing that 
the behavior of the model is sensibly different from the attractive case.

\smallskip

For the reader's convenience, and in order to get some intuition
on our model, we recall briefly the result obtained in \cite{cf:CP} for $\gd > 0$.
We first set some notation:
given a positive sequence $\{a_N\}_N$, we write
$S_N \asymp a_N$ to indicate that, on the one hand, $S_N / a_N$ is tight
(for every $\gep > 0$ there exists $M > 0$ such that
$\bP_{N,\gd}^{T_N} \big( |S_N/a_N| > M \big) \le \gep$ for
large $N$) and, on the other hand, that for some
$\rho \in (0,1)$ and $\eta > 0$ we have $\bP_{N,\gd}^{T_N}
\big( |S_N/a_N| > \eta \big) \ge \rho$ for large $N$.
This notation catches the rate of asymptotic growth of $S_N$
somehow precisely: if $S_N \asymp a_N$ and
$S_N \asymp b_N$, for some $\gep > 0$
we must have $\gep a_N \le b_N \le \gep^{-1} a_N$, for large $N$.

Theorem~2 in \cite{cf:CP} can be read as follows:
for every $\gd >0$ there exists $c_\gd > 0$ such that
\begin{equation} \label{eq:asdelta>0}
	S_N \text{ under } \bP_{N,\gd}^{T_N}
	\; \asymp \; \begin{cases}
	\sqrt{N} \, e^{-\frac{c_\delta }{2} T_N}\, T_N  & \text{if }
	T_N - \frac{1}{c_\gd} \log N \to -\infty\\
	T_N & \text{if } T_N - \frac{1}{c_\gd} \log N = O(1)\\
	1 & \text{if } T_N - \frac{1}{c_\gd} \log N \to +\infty
	\end{cases}\,.
\end{equation}
Let us give an heuristic explanation for these scalings.
For fixed $T \in 2\N$, the process $\{S_n\}_{0 \le n \le N}$
under $\bP_{N,\gd}^{T}$ behaves approximately like a time-homogeneous
Markov process (for a precise statement in this direction
see \S\ref{sec:renewal}). A quantity of basic interest is the
first time $\hat \tau := \inf\{n > 0:\, |S_n| = T \}$ at which
the polymer visits a neighboring interface. It turns
out that for $\gd > 0$ the typical size of $\hat \tau$
is of order $\approx e^{c_\gd T}$, so that until epoch $N$
the polymer will make approximately $N/e^{c_\gd T}$ changes
of interface. 

Assuming that these arguments can be applied
also when $T = T_N$ varies with $N$, it follows that
the process $\{S_n\}_{0 \le n \le N}$ jumps from an interface
to a neighboring one a number of times which is
approximately $u_N := N/e^{c_\gd T_N}$.
By symmetry, the probability of jumping to the
neighboring upper interface
is the same as the probability of jumping to the lower one,
hence the last visited interface will be approximately
the square root of the number of jumps. Therefore,
when $u_N \to \infty$, one expects that
$S_N$ will be typically of order $T_N \cdot \sqrt{u_N}$,
which matches perfectly with the first line of \eqref{eq:asdelta>0}.
On the other hand, when $u_N \to 0$ the polymer will never visit
any interface different from the one located at zero and, because
of the attractive reward $\gd > 0$, $S_N$ will be typically at
finite distance from this interface, in agreement with the
third line of \eqref{eq:asdelta>0}. Finally, when $u_N$ is
bounded, the polymer visits a finite number of different interfaces
and therefore $S_N$ will be of the same order as $T_N$,
as the second line of \eqref{eq:asdelta>0} shows.


\smallskip
\subsection{The main results}

Also in the repulsive case $\gd < 0$ one can perform an
analogous heuristic analysis. The big difference with respect
to the attractive case is the following: under $\bP_{N,\gd}^T$,
the time $\hat \tau$ the polymer needs to jump
from an interface to a neighboring one turns out to be typically of order $T^3$
(see Section~\ref{sec:preliminary}).
Assuming that these considerations can be applied
also to the case when $T = T_N$ varies with~$N$,
we conclude that, under $\bP_{N,\gd}^{T_N}$, the total number of jumps from
an interface to the neighboring one
should be of order $v_N := N/T_N^3$.
One can therefore conjecture that if $v_N \to +\infty$
the typical size of $S_N$ should be of order $T_N \cdot \sqrt{v_N} = \sqrt{N/T_N}$,
while if $v_N$ remains bounded one should have $S_N \asymp T_N$.

In the case $v_N \to 0$, the polymer will never exit the interval
$(-T_N, +T_N)$. However, guessing the right scaling in this case
requires some care: in fact, due to the
repulsive penalty $\gd < 0$, the polymer will {\sl not} remain close
to the interface located at zero, as it were for $\gd > 0$,
but it will rather spread in the
interval $(-T_N, +T_N)$. We are therefore led to distinguish two cases:
if $T_N = O(\sqrt{N})$
then $S_N$ should be of order $T_N$, while if $T_N \gg \sqrt{N}$
we should have $S_N \asymp \sqrt{N}$ (of course we write $a_N \ll b_N$
iff $a_N / b_N \to 0$ and $a_N \gg b_N$ iff $a_N / b_N \to +\infty$).
We can sum up these considerations in the following formula:
\begin{equation} \label{eq:asdelta<0}
	S_N \; \asymp \; \begin{cases}
	\sqrt{N/T_N} & \text{if }\ T_N \ll N^{1/3}\\
	T_N & \text{if }\ (const.) N^{1/3} \le T_N \le (const.) \sqrt{N}\\
	\sqrt{N} & \text{if }\ T_N \gg \sqrt{N}
	\end{cases}\,.
\end{equation}
It turns out that these conjectures are indeed correct:
the following theorem makes this precise, together with
some details on the scaling laws.

\medskip

\begin{theorem} \label{th:main}
Let $\delta<0$ and $\{T_N\}_{N\in\N} \in (2\N)^{\N}$
be such that $T_N \to \infty$ as $N\to\infty$.
\begin{enumerate}
\item \label{part:1}
\rule{0pt}{1.3em}If $\,T_N \ll N^{1/3}$, then
$S_N \asymp \sqrt{N/T_N}$. More precisely,
there exist two constants $0 < c_1 < c_2 < \infty$ such that
for all $a,b \in \R$ with $a < b$ we have for $N$ large enough
\begin{equation} \label{eq:infinite}
	c_1 \, P\big[ a < Z \le b \big] \;\le\;
	\bP_{N,\delta}^{T_N} \left( a <
	\frac{S_N}{C_\delta \, \textstyle\sqrt{\frac{N}{T_N}}} \le b \right)
    \;\le\; c_2 \, P\big[ a < Z \le b \big] \,,
\end{equation}
where $C_\gd := \pi / \sqrt{e^{-\delta}-1}$
is an explicit positive constant and $Z \sim \cN(0,1)$.

\item \label{part:2}
\rule{0pt}{1.3em}If $\,T_N \sim (const.) N^{1/3}$, then
$S_N \asymp T_N$. More precisely,
for every $\gep > 0$ small enough there exist constants $M,\eta>0$
such that $\forall N\in\N$
\begin{equation}\label{eq:crit}
	\bP_{N,\delta}^{T_N} \big(|S_N| \le M \, T_N\big)
	\;\ge\; 1-\gep \,, \qquad
	\bP_{N,\delta}^{T_N} \big(|S_N| \ge \eta \, T_N\big)
	\;\ge\; 1-\gep \,.
\end{equation}

\item \label{part:3}
\rule{0pt}{1.3em}If $\,N^{1/3} \ll T_N \le (const.)\sqrt{N}$, then
$S_N \asymp T_N$. More precisely,
for every $\gep > 0$ small enough there exist
constants $L,\eta > 0$ such that $\forall N\in \N$
\begin{equation}\label{eq:supercrit1}
	\bP_{N,\delta}^{T_N} \big(
	0 < |S_n| < T_N \,, \ \forall n \in \{ L, N\} \big) \;\ge\; 1- \gep\,,
	\qquad \bP_{N,\delta}^{T_N} \big( |S_N| \ge  \eta \, T_N \big) \;\ge\; 1-\gep \,.
\end{equation}

\item \label{part:4}
\rule{0pt}{1.3em}If $\,T_N \gg \sqrt{N}$, then
$S_N \asymp \sqrt N$. More precisely,
for every $\gep > 0$ small enough there exist
constants $L,M,\eta > 0$ such that $\forall N\in \N$
\begin{equation}\label{eq:supercrit2}
	\bP_{N,\delta}^{T_N} \big(
	0 < |S_n| < M \sqrt{N} \,, \ \forall n \in \{ L, N\} \big) \;\ge\; 1-\gep\,,
	\qquad \bP_{N,\delta}^{T_N} \big( |S_N| \ge  \eta  \sqrt{N} \big)
	\;\ge\; 1-\gep \,.
\end{equation}

\end{enumerate}
\end{theorem}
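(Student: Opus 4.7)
The overall strategy is to exploit the Markov renewal structure of the polymer induced by the successive visits to the interface set $T_N\Z$. Set $\tau_0:=0$, $\tau_{j+1}:=\inf\{n>\tau_j:\,S_n\in T_N\Z\}$, and $J_j:=(S_{\tau_j}-S_{\tau_{j-1}})/T_N\in\{-1,0,+1\}$. Since $H^{T_N}_{N,\delta}(S)=\delta\cdot\#\{j\geq 1:\tau_j\leq N\}$ and $T_N\Z$ is translation invariant, $Z^{T_N}_{N,\delta}$ decomposes as a sum over renewal configurations whose i.i.d.\ increments carry the sub-probability weight $e^{\delta}P(\tau_1=n,\,S_{\tau_1}-S_0=zT_N)$. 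Introducing the exponential tilt $e^{f^\star(T_N,\delta)\,n}$, where $f^\star>0$ is the unique solution of $e^{\delta}E[e^{f^\star\tau_1}]=1$, restores a genuine probability kernel
\begin{equation*}
K^\star(n,z)\;:=\;e^{\delta+f^\star(T_N,\delta)\,n}\,P\big(\tau_1=n,\,S_{\tau_1}-S_0=zT_N\big),\qquad z\in\{-1,0,+1\},
\end{equation*}
under which the completed renewal steps are i.i.d. The first key step is a sharp analysis of this kernel: diagonalizing the simple random walk on $\{1,\ldots,T-1\}$ absorbed at $\{0,T\}$ (eigenvalues $\cos(k\pi/T)$) yields, near the singularity $s_T:=-\log\cos(\pi/T)\sim\pi^2/(2T^2)$, an expansion $M(s):=E[e^{s\tau_1}]=M_{\mathrm{reg}}(s)+\alpha_T/(s_T-s)$ with regular part $M_{\mathrm{reg}}(s)\to 1$ and pole residue $\alpha_T\sim 2\pi^2/T^3$. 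Solving $M(f^\star)=e^{-\delta}$ then gives $s_T-f^\star\sim\alpha_T/(e^{-\delta}-1)$, from which a direct computation leads to $\mu^\star_T:=\sum_{n,z}nK^\star(n,z)\asymp T^3$, $p^\star_T:=P^\star(|J_1|=1)\to(1-e^\delta)/2$, and crucially $T_N^2 p^\star_{T_N}/\mu^\star_{T_N}\sim C_\delta^{\,2}/T_N$ with exactly $C_\delta=\pi/\sqrt{e^{-\delta}-1}$; this is the origin of the prefactor in \eqref{eq:infinite}.

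With these ingredients, the four parts of Theorem~\ref{th:main} follow by comparing $N$ with the mean interface-jump time $\mu^\star_{T_N}/p^\star_{T_N}\asymp T_N^3$ and with the interface-reach time $T_N^2$. In part~\eqref{part:1}, $T_N\ll N^{1/3}$ means that the number of completed renewal steps $N/\mu^\star_{T_N}\to\infty$; a Markov-renewal CLT applied to the i.i.d.\ displacements $T_N J_j$ yields $S_N/(C_\delta\sqrt{N/T_N})\Rightarrow\cN(0,1)$, from which \eqref{eq:infinite} is immediate. In part~\eqref{part:2}, $T_N\asymp N^{1/3}$ makes this number tight and non-degenerate: the upper bound in \eqref{eq:crit} follows from second-moment control on $\sum_j T_N J_j$, and the lower bound from the positive probability that at least one interface jump occurs. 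For part~\eqref{part:3}, $T_N^3\gg N$ but $T_N\leq(const.)\sqrt N$: with probability $\to 1$ no interface jump occurs, so one conditions on $\{\tau_1>N\}$; since $N\gtrsim T_N^2$, the conditioned SRW has had time to spread through $(-T_N,T_N)\setminus\{0\}$, and an interior-density estimate (using the same spectral data as above) shows that $|S_N|/T_N$ is tight and bounded away from~$0$, giving \eqref{eq:supercrit1} after discarding an initial transient of length $L$. Finally in part~\eqref{part:4}, $T_N\gg\sqrt N$: the hitting time of $\pm T_N$ is of order $T_N^2\gg N$, so with probability $\to 1$ the polymer never reaches a neighbouring interface and behaves as an essentially free SRW after leaving the origin, whence \eqref{eq:supercrit2} follows from the classical CLT applied to $S_N/\sqrt N$.

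The main obstacle is the sharp analysis of $M$ and $K^\star$ near $s_T$ needed to pin down the exact constant $C_\delta=\pi/\sqrt{e^{-\delta}-1}$ rather than merely the scaling $\sqrt{N/T_N}$. The subtlety is that $\alpha_T=o(1)$ as $T\to\infty$, so the regular part of $M$ (of order $1$) cannot be neglected when solving $M(f^\star)=e^{-\delta}$; this is precisely what forces $s_T-f^\star\sim\alpha_T/(e^{-\delta}-1)$ and puts $e^{-\delta}-1$, rather than $e^{-\delta}$, in the denominator of $C_\delta$. Parity must also be handled carefully, the assumption $T\in 2\N$ being used to keep $\tau_1$ always even. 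A second delicate point is the passage from a weak CLT for the Markov renewal to the law of $S_N$ itself: since the partition function fixes the total path length at exactly $N$, one must upgrade to a local CLT for the renewal and separately estimate the contribution of the last, unfinished excursion, whose length $\asymp T_N^2$ produces a fluctuation of order $T_N$ that is a priori not negligible on the scale $\sqrt{N/T_N}$ in part~\eqref{part:1}. Finally, in part~\eqref{part:3} the relevant crossover is governed by the same spectral gap $\pi^2/(2T_N^2)$, so producing bounds valid uniformly in $N$ as in \eqref{eq:supercrit1} requires splicing the renewal-theoretic estimates of parts~\eqref{part:1}--\eqref{part:2} together with an interior-density control for the SRW conditioned to avoid $T_N\Z$.
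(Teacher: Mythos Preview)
Your overall architecture---the Markov renewal decomposition at the contacts with $T_N\Z$, the exponential tilt that turns the defective kernel into a probability, and the spectral analysis of the SRW on $\{1,\ldots,T-1\}$ to extract the asymptotics of the step law---is exactly the framework the paper uses (there the tilt parameter is written $\phi(\delta,T)$ and the kernel is $\cP_{\delta,T}$). Your computation of the constant $C_\delta$ via the pole residue is also the same mechanism as the paper's Appendix~A.

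The genuine gap is in part~(\ref{part:1}). You assert that ``a Markov-renewal CLT \ldots\ yields $S_N/(C_\delta\sqrt{N/T_N})\Rightarrow\cN(0,1)$, from which \eqref{eq:infinite} is immediate''. But \eqref{eq:infinite} is stated only with two-sided constants $0<c_1<c_2<\infty$, not with $c_1=c_2=1$; the paper explicitly says that full convergence in law is a conjecture it cannot prove in this generality (only for $T_N\ll N^{1/6}$). The obstruction is precisely the one you flag at the end but do not resolve: to pass from the free tilted renewal to the polymer measure one has to condition on (or weight by) $\{N\in\tau\}$, and this requires a renewal-function estimate of the form $\cP_{\delta,T}(n\in\tau)=(1+o(1))/\cE_{\delta,T}(\tau_1)$ \emph{uniformly in $T$} for $n\gg T^3$. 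What is actually available (the paper's Proposition~2.4) is only the two-sided bound $c_1/T^3\le\cP_{\delta,T}(n\in\tau)\le c_2/T^3$, and this is exactly where the constants $c_1,c_2$ in \eqref{eq:infinite} come from. So your sketch either overshoots the statement (if you insist on the full CLT) or is incomplete (if you are content with bounds, you still have to show how the imprecise renewal estimate propagates to the two-sided inequality, which in the paper takes a careful four-step argument: CLT for $Y_{k_N}$, then for $Y_{L_{N-u}}$, then conditioning on $\{N-u\in\tau\}$ via a ratio $\Theta$ that is only bounded above and below, then removing the last incomplete excursion).

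Two smaller points. In part~(\ref{part:4}) the walk, after its last contact with $0$, is conditioned to stay strictly positive (or negative) up to time $N$; the relevant limit object is therefore the Brownian meander, not the ``classical CLT'' for $S_N/\sqrt N$. The paper uses exactly this (Bolthausen's functional limit theorem) to get both the upper and the lower bound in \eqref{eq:supercrit2}. And in part~(\ref{part:3}) your ``interior-density estimate'' for the walk conditioned on $\{\tau_1>N\}$ is plausible but is not what the paper does; it instead bounds the lower tail $\bP^{T_N}_{N,\delta}(|S_N|\le\eta T_N)$ by a short path-extension trick (append $\le\eta^2T_N^2$ steps to force a return to $0$) combined with the same partition-function and renewal-function bounds, which avoids any quasi-stationary analysis.
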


\medskip

To have a more intuitive view on the scaling behaviors
in \eqref{eq:asdelta<0}, let us consider the concrete example
$T_N \sim (const.) N^a$: in this case we have
\begin{equation} \label{eq:scalings}
	S_N \; \asymp \; \begin{cases}
	N^{(1-a)/2} & \text{if } 0 \le a \le \frac 13\\
	N^a & \text{if } \frac 13 \le a \le \frac 12\\
	N^{1/2} & \text{if } a \ge \frac 12
	\end{cases}\,.
\end{equation}
As the speed of growth of $T_N$ increases, in a first
time (until $a=\frac 13$) the scaling of $S_N$
decreases, reaching a minimum $N^{1/3}$, after which it
increases to reattain the initial value $N^{1/2}$, for $a \ge \frac 12$.

We have thus shown that the asymptotic behavior of our model displays two
transitions, at $T_N \approx \sqrt{N}$ and at $T_N \approx N^{1/3}$.
While the first one is somewhat natural,
in view of the diffusive behavior of the simple random walk,
the transition happening at $T_N \approx N^{1/3}$ is
certainly more surprising and somehow unexpected.

\smallskip

Let us make some further comments on Theorem~\ref{th:main}.

\begin{itemize}
\item About regime (\ref{part:1}), that is when $T_N \ll N^{1/3}$,
we actually conjecture that equation \eqref{eq:infinite} can be strengthened to
a full convergence in distribution:
$S_N/(C_\gd \sqrt {N/T_N}) \Longrightarrow \cN(0,1)$.
The reason for the slightly weaker result that we present is that
we miss precise renewal theory estimates for a basic renewal process,
that we define in \S\ref{sec:renewal}. As a matter of fact,
using the techniques in \cite{cf:Ney} one can refine
our proof and show that the full convergence in
distribution holds true in the restricted regime $T_N \ll N^{1/6}$,
but we omit the details for conciseness
(see however the discussion following Proposition~\ref{th:bound_renewal}).

\item Equation \eqref{eq:infinite}
implies that the sequence $\{S_N/(C_\gd \sqrt {N/T_N})\}_N$
is {\sl tight}, and that the limit law of any converging subsequence 
is absolutely continuous w.r.t. the Lebesgue
measure on $\mathbb{R}$. Moreover, the density of this limit law is bounded above and below
by a multiple of the standard normal density.

\item The case when $T_N \to T \in \R$ as $N\to\infty$ has not been included
in Theorem~\ref{th:main} for the sake of simplicity. However a straightforward
adaptation of our proof shows that in this case equation \eqref{eq:infinite}
still holds true, with $C_\gd$ replaced by a different
($T$-dependent) constant $\widehat C_\gd(T)$.

\item We stress that in regimes (\ref{part:3}) and (\ref{part:4})
the polymer really touches the interface at zero a finite
number of times, after which it does not touch any other interface.

\end{itemize}


\smallskip
\subsection{A link with a polymer in a slit}
\label{sec:slit}

It turns out that our model $\bP^T_{N,\gd}$ is closely related to a model
which has received quite some attention in the recent physical literature,
the so-called {\sl polymer confined between two attractive walls}
\cite{cf:Brak,cf:Martin,cf:Owczarek}
(also known as polymer in a slit). This is a model for the
steric stabilization and sensitized flocculation of colloidal dispersions
induced by polymers,
which can be simply described as follows:
given $N,T \in 2\N$, take the first $N$ steps of the simple
random walk constrained not to exit the interval $\{0,T\}$,
and give each trajectory a reward/penalization $\gamma \in \R$
each time it touches $0$ or $T$ (one can also consider two different
rewards/penalties $\gamma_0$ and $\gamma_T$, but we will stick to the
case $\gamma_0 = \gamma_T = \gamma$). We are thus considering
the probability measure $Q_{N,\gamma}^T$ defined by
\begin{equation} \label{eq:phys}
	\frac{\dd Q_{N,\gamma}^T}{\dd P_N^{c,T}}(S)
	\;\propto\; \exp\left( \gamma \sum_{i=1}^N
	\ind_{\{S_i = 0 \text{ or } S_i = T\}} \right)
	\,,
\end{equation}
where $P_N^{c,T}(\,\cdot\,) := P(\,\cdot\,|\, 0 \le S_i \le T \text{ for all } 0 \le i \le N)$
is the law of the simple random walk {\sl constrained} to stay between
the two walls located at $0$ and $T$.

\begin{figure}[t]
\includegraphics[width=.84\textwidth]{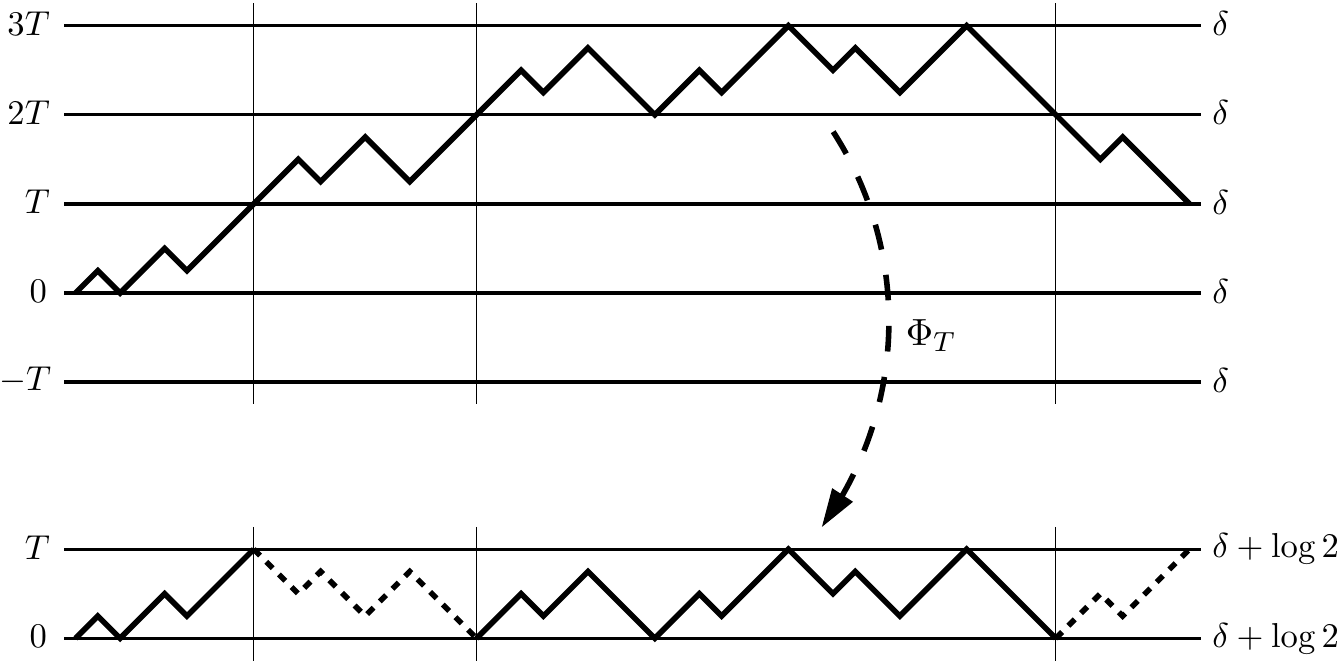}
\smallskip
\caption{A polymer trajectory in a multi-interface medium transformed,
after reflection on the interfaces $0$ and $T$,
in a trajectory of polymer in a slit. The dotted
lines correspond to the parts of trajectory that appear
upside-down after the reflection.}
\label{fig:multuni1}
\end{figure}

Consider now the simple random walk
{\sl reflected} on both walls $0$ and $T$, which may be defined as
$\{\Phi_T(S_n)\}_{n\ge 0}$, where $(\{S_n\}_{n\ge 0}, P)$ is the
ordinary simple random walk and
\begin{equation*}
	\Phi_T(x) \;:=\; \min \big\{\,[x]_{2T}, 2T - [x]_{2T} \big\}\,,
	\qquad \text{with} \qquad
	[x]_{2T} \;:=\: 2T\, \Big(\frac{x}{2T} - \Big\lfloor \frac{x}{2T} \Big\rfloor\Big)\,,
\end{equation*}
that is, $[x]_{2T}$
denotes the equivalence class of $x$ modulo $2T$ (see Figure~\ref{fig:multuni1}
for a graphical description). We denote by $P_N^{r,T}$ the law of
the first $N$ steps of $\{\Phi_T(S_n)\}_{n\geq 0}$. Of
course, $P_N^{r,T}$ is different from $P_N^{c,T}$: the latter is the uniform measure on
the simple random walk paths $\{S_n\}_{0 \le n \le N}$ that stay in
$\{0,T\}$, while under the former each such path has a probability which
is proportional to $2^{\cN_N}$, where
$\cN_N = \sum_{i=1}^N \ind_{\{S_i = 0 \text{ or } S_i = T\}}$ is the
number of times the path has touched the walls. In other terms, we have
\begin{equation} \label{eq:phys2}
	\frac{\dd P_N^{c,T}}{\dd P_N^{r,T}} (S) \;\propto\;
	\exp \left( -(\log 2) \sum_{i=1}^N \ind_{\{S_i = 0 \text{ or } S_i = T\}} \right) \,.
\end{equation}

If we consider the reflection under $\Phi_T$ of our model, that is
the process $\{\Phi_T(S_n)\}_{0 \le n \le N}$ under $\bP^T_{N,\gd}$,
whose law will be simply denoted by $\Phi_T(\bP^T_{N,\gd})$, then it comes
\begin{equation}\label{eq:phyphy}
\frac{\dd \Phi_T(\bP^T_{N,\gd})}{\dd P_N^{r,T}} (S) \;\propto\;
	\exp \left( \delta \sum_{i=1}^N \ind_{\{S_i = 0 \text{ or } S_i = T\}} \right) \,.
\end{equation}
At this stage, a look at equations \eqref{eq:phys}, \eqref{eq:phys2} and \eqref{eq:phyphy} points
out the link with our model:
we have the basic identity $Q^T_{N,\gd + \log 2} = \Phi_T(\bP^T_{N, \gd})$, for
all $\gd \in \R$ and $T,N \in 2\N$. In words, the polymer confined
between two attractive walls is just the reflection of our
model through $\Phi_T$, up to a shift of the pinning intensity by $\log 2$.
This allows a direct translation of all our results in this new framework.

\smallskip

Let us describe in detail a particular issue, namely,
the study of the model $Q^T_{N,\gamma}$
when $T = T_N$ is allowed to vary with $N$
(this is interesting, e.g., in order to interpolate between the two extreme
cases when one of the two quantities $T$ and $N$ tends to $\infty$ before the other).
This problem is considered in \cite{cf:Owczarek}, where the authors obtain
some asymptotic expressions for the partition function
$Z_{n,w}(a,b)$ of a polymer in a slit, in the case of two different rewards/penalties
(we are following their notation, in which
$n=N$, $w = T$, $a = \exp(\gamma_0)$ and $b = \exp(\gamma_T)$)
and with the boundary condition $S_N = 0$.
Focusing on the case $a = b = \exp(\gamma)$,
we mention in particular equations (6.4)--(6.6) in~\cite{cf:Owczarek},
which for $a < 2$ read as
\begin{equation} \label{eq:them1}
	Z_{n,w}(a,a) \;\approx\; \frac{(const.)}{n^{3/2}} \, f_{\text{phase}}
	\left( \frac{\sqrt n}{w} \right)\,,
\end{equation}
where we have neglected a combinatorial factor $2^n$ (which just comes from
a different choice of notation), and where
the function $f_{\text{phase}}(x)$ is such that
\begin{equation} \label{eq:them2}
	f_{\text{phase}}(x) \;\to\; 1 \ \text{ as } \ x \to 0\,, \qquad
	f_{\text{phase}}(x) \;\approx\; x^3 \, e^{-\pi^2 x^2 / 2}
	\ \text{ as } \ x \to \infty \,.
\end{equation}
The regime $a < 2$ corresponds to $\gamma < \log 2$, hence, in view of
the correspondence $\gd = \gamma - \log 2$ described above, we are exactly in
the regime $\gd < 0$ for our model $\bP^T_{N, \gd}$.
We recall \eqref{eq:model} and, with the help of equation
\eqref{eq:overandover}, we can express the partition function
with boundary condition $S_N \in (2T)\Z$ as
\begin{equation*}
	Z^{T,\, \{S_N\in (2T)\Z\}}_{N,\gd} \;\sim\; O(1)\, 
	Z^{T,\,\{S_N\in T\Z\}}_{N,\gd} \;\sim\; O(1) \, e^{\phi(\gd,T) N} \,
	\cP_{\gd, T}(N \in \tau) \,,
\end{equation*}
where, with some abuse of notation, we denote by $O(1)$ a quantity which
stays bounded away from $0$ and $\infty$ as $N \to\infty$. In this formula,
$\phi(\gd, T)$ is the {\sl free energy} of our model and
$(\{\tau_n\}_{n \in \Z^+}, \cP_{\gd, T})$ is a basic renewal process,
introduced respectively in \S\ref{sec:free_energy} and \S\ref{sec:renewal} below.
In the case when $T = T_N \to \infty$, we can use the asymptotic development
\eqref{eq:phineg} for $\phi(\gd, T)$, which, combined with the bounds
in \eqref{eq:bound_renewal}, gives as $N,T \to \infty$
\begin{equation*}
	Z^{T,\, \{S_N\in (2T)\Z\}}_{N,\gd} \,=\, \frac{O(1)}{N^{3/2}} \;
	\max \bigg\{ 1, \bigg( \frac{\sqrt N}{T} \bigg)^3 \bigg\} \;
	\exp \left(- \frac{\pi^2}{2} \frac{N}{T^2} + \frac{2 \pi^2}{e^{-\gd}-1}
	\frac{N}{T^3}
	+ o\left(\frac{N}{T^3}\right) \right).
\end{equation*}
Since $Z^{T,\,\{S_N\in (2T)\Z\}}_{N,\gd} = Z_{n,w}(a,a)$, we can rewrite
this relation using the notation of~\cite{cf:Owczarek}:
\begin{equation*}
	Z_{n,w}(a,a) \;\approx\; \frac{(const.)}{n^{3/2}} \, f_{\text{phase}}
	\left( \frac{\sqrt n}{w} \right) \, g \bigg( \frac{n^{1/3}}{w} \bigg) \,,
	\quad \ \text{where} \ \ g(x) \;\approx\; e^{\frac{2\pi^2}{e^{-\gd}-1} x}
	\ \ \text{as} \ x \to \infty\,.
\end{equation*}

We have therefore obtained a refinement of
equations \eqref{eq:them1}, \eqref{eq:them2}.
This is linked to the fact that we have gone beyond the first order in
the asymptotic development of the free energy $\phi(\gd, T)$, making
an additional term of the order $N/T_N^3$ appear.
We stress that this new term gives a non-negligible (in fact, exponentially diverging!)
contribution as soon as $T_N \ll N^{1/3}$ ($w \ll n^{1/3}$ in the notation
of~\cite{cf:Owczarek}).
This corresponds to the fact that, by Theorem~\ref{th:main},
the trajectories that touch the walls a number of times of the order $N/T_N^3$
are actually dominating the partition function when $T_N \ll N^{1/3}$.
Of course, a higher order development of the free energy
(cf. Appendix~\ref{sec:fe_estimates}) may lead to
further correction terms.

\smallskip
\subsection{Outline of the paper}
Proving Theorem \ref{th:main} requires to settle some technical tools,
partially taken from~\cite{cf:CP},
that we present in Section \ref{sec:preliminary}.
More precisely,
in \S\ref{sec:free_energy} we introduce the free energy $\phi(\delta,T)$
of the polymer and we describe its asymptotic behavior
as $T\to \infty$ (for fixed $\delta<0$).
In \S\ref{sec:renewal} we enlighten a basic correspondence between the polymer
constrained to hit one of the interfaces at its right extremity and an
explicit renewal process.
In \S\ref{sec:asymp} we investigate further this renewal process,
providing estimates on the renewal function, which are of
crucial importance for the proof of Theorem~\ref{th:main}.
Sections \ref{sec:parti}, \ref{sec:partii},
\ref{sec:partiii} and \ref{sec:partiv} are dedicated respectively
to the proof of parts (\ref{part:1}), (\ref{part:2}), (\ref{part:3})
and (\ref{part:4}) of Theorem~\ref{th:main}. Finally,
some technical results are proven in the appendices.

We stress that the value of $\gd < 0$ is kept fixed throughout
the paper, so that the generic constants appearing in the proofs
may be $\gd$-dependent.


\medskip

\section{A renewal theory viewpoint}

\label{sec:preliminary}

In this section we recall some features of our model, including
a basic renewal theory representation, originally proven in \cite{cf:CP},
and we derive some new estimates.


\smallskip
\subsection{The free energy}

\label{sec:free_energy}

Considering for a moment our model when $T_N \equiv T \in 2\N$ is fixed,
i.e., it does not vary with $N$, we define the {\sl free energy}
$\phi(\delta, T)$ as the rate of exponential growth
of the partition function $Z_{N,\gd}^T$ as $N\to\infty$:
\begin{equation}\label{eq:fe}
    \phi( \gd, T) \;:=\; \lim_{N\to\infty} \, \frac 1N
    \, \log Z^{T}_{N,\gd} \;=\;
    \lim_{N\to\infty} \, \frac 1N
    \, \log \, E \left( e^{H_{N,\gd}^T} \right) \,.
\end{equation}
Generally speaking, the reason for looking at this function is
that the values of $\gd$ (if any) at which $\gd \mapsto \phi(\gd, T)$ is
not analytic correspond physically to the occurrence of a
{\sl phase transition} in the system.
As a matter of fact, in our case $\gd \mapsto \phi(\gd, T)$ is analytic
on the whole real line, for every $T \in 2\N$.
Nevertheless, the free energy $\phi(\gd, T)$ turns out to be a very useful
tool to obtain a path description of our model, even when
$T = T_N$ varies with $N$, as we explain in detail
in \S\ref{sec:renewal}. For this reason, we now recall some basic facts
on $\phi(\gd, T)$, that were proven in \cite{cf:CP}, and we derive its
asymptotic behavior as $T\to\infty$.

We introduce $\tau_1^T := \inf\{ n>0:\, S_n\in\{-T, 0, +T\} \}$,
that is the first epoch at which the polymer visits an
interface, and we denote by $Q_T(\gl):= E \big( e^{-\gl \tau_1^T} \big)$
its Laplace transform under the law of the simple random walk.
We point out that $Q_T(\gl)$ is finite and
analytic on the interval $(\gl_0^T, \infty)$,
where $\gl_0^T 
< 0$,
and $Q_T(\gl) \to +\infty$ as $\gl \downarrow \gl_0^T$ (as a matter of fact,
one can give a closed explicit expression for $Q_T(\gl)$, cf.
equations (A.4) and (A.5) in \cite{cf:CP}). A basic fact is that $Q_T(\cdot)$
is sharply linked to the free energy: more precisely, we have
\begin{equation}\label{eq:energie}
\phi(\gd, T) = (Q_T)^{-1}(e^{-\gd}),
\end{equation}
for every $\gd \in \R$
(see Theorem~1 in~\cite{cf:CP}). From this, it is easy to obtain
an asymptotic expansion of $\phi(\gd, T)$ as $T \to \infty$, for fixed
$\gd < 0$, which reads as
\begin{equation} \label{eq:phineg}
    \phi(\gd,T) \;=\; - \frac{\pi^2}{2T^2} \bigg( 1 -
    \frac{4}{e^{-\gd} - 1}\,\frac 1T + o\bigg( \frac 1T \bigg) \bigg)\,,
\end{equation}
as we prove in Appendix~\ref{sec:fe_estimates}.


\smallskip

\subsection{A renewal theory interpretation}
\label{sec:renewal}

We now recall a basic renewal theory description of our model,
that was proven in \S2.2 of~\cite{cf:CP}.
We have already introduced the first epoch $\tau_1^T$ at which
the polymer visits an interface. Let us extend this definition:
for $T\in 2\mathbb{N}\cup \{\infty\}$, we set $\tau^T_{0}=0$ and for $j\in \N$
\begin{equation}\label{jump}
\tau^T_{j} \;:=\; \inf\big\{n > \tau^T_{j-1}: \ S_n\in T \Z \big\}
\qquad \text{and} \qquad
\varepsilon^T_{j} \;:=\; \tfrac{S_{\tau^T_{j}}-S_{\tau^T_{j-1}}}{T}\,,
\end{equation}
where for $T = \infty$ we agree that $T\Z = \{0\}$. Plainly,
$\tau^T_j$ is the $j^{\text{th}}$ epoch at which $S$
visits an interface and $\varepsilon^T_j$
tells whether the $j^{\text{th}}$ visited interface
is the same as the $(j-1)^{\text{th}}$ ($\varepsilon^T_j=0$),
or the one above
($\varepsilon^T_j=1$) or below ($\varepsilon^T_j=-1$).
We denote by $q_T^j(n)$ the joint law of $(\tau^T_1, \gep^T_1)$
under the law of the simple random walk:
\begin{equation}\label{eq:defQ}
    q^j_{T}(n) \;:=\; P\big( \tau^T_1=n\,,\, \varepsilon^T_1=j \big)\,.
\end{equation}
Of course, by symmetry we have that $q^1_T(n) = q^{-1}_T(n)$ for every $n$ and $T$.
We also set
\begin{equation} \label{eq:deftau}
	q_T(n) \;:=\; P\big( \tau_1^T = n \big) \;=\; q_T^0(n) \,+\,
	2 \, q_T^1(n) \,.
\end{equation}

Next we introduce a Markov chain $(\{(\tau_j, \gep_j)\}_{j \ge 0}, \cP_{\gd, T})$
taking values in $(\N\cup\{0\}) \times \{-1, 0, 1\}$,
defined in the following way: $\tau_0 := \gep_0 := 0$ and under $\cP_{\gd, T}$
the sequence of vectors $\{(\tau_j - \tau_{j-1}, \gep_j)\}_{j \ge 1}$ is i.i.d.
with marginal distribution
\begin{equation} \label{eq:defPdeltaT}
	\cP_{\gd, T}(\tau_1 = n,\, \gep_1 = j)
	\;:=\; e^\gd \, q^j_T(n) \, e^{-\phi(\gd, T) \, n} \,.
\end{equation}
The fact that the r.h.s. of this equation indeed defines a probability
law follows from \eqref{eq:energie}, which implies that $Q(\phi(\gd,T)) = E(e^{-\phi(\gd,T) \tau_1^T}) = e^{-\gd}$.
Notice that the process $\{\tau_j\}_{j \ge 0}$ alone under $\cP_{\gd, T}$
is a (undelayed) {\sl renewal process}, i.e. $\tau_0 = 0$ and
the variables $\{\tau_j - \tau_{j-1}\}_{j \ge 1}$ are i.i.d., with step law
\begin{equation} \label{eq:taudelta}
	\cP_{\gd, T}(\tau_1 = n ) \;=\; e^{\gd} \, q_T(n) \, e^{-\phi(\gd,T) n}
	\;=\; e^{\gd} \, P(\tau_1^T = n) \, e^{-\phi(\gd,T) n} \,.
\end{equation}

Let us now make the link between the law $\cP_{\gd, T}$ and our model
$\bP_{N,\gd}^T$. We introduce two variables that count how many epochs
have taken place before $N$, in the processes $\tau^T$ and $\tau$ respectively:
\begin{equation} \label{eq:L}
	L_{N,T} \;:=\; \sup \big\{ n \ge 0:\ \tau_n^T \le N \big\}\,,
	\qquad
	L_{N} \;:=\; \sup \big\{ n \ge 0:\ \tau_n \le N \big\} \,.
\end{equation}
We then have the following crucial result (cf. equation (2.13) in \cite{cf:CP}):
for all $N,T \in 2\N$
and for all $k\in\N$, $\{t_i\}_{1 \le i \le k} \in \N^k$,
$\{\gs_i\}_{1 \le i \le k} \in \{-1, 0, +1\}^k$ we have
\begin{align} \label{eq:crucial}
\begin{split}
	& \bP_{N,\delta}^{T} \Big( L_{N,T} = k,\ (\tau_i^T , \gep_i^T) = (t_i,\gs_i),\,
	1 \le i \le k  \,\Big|\, N \in \tau^T \Big)\\
	& \qquad \quad \;=\; \cP_{\delta,T} \Big( L_{N} = k,\ (\tau_i , \gep_i)
	= (t_i,\gs_i),\,
	1 \le i \le k \,\Big|\, N \in \tau \Big)\,,
\end{split}
\end{align}
where $\{N \in \tau\} := \union_{k=0}^\infty \{\tau_k = N\}$
and analogously for $\{N \in \tau^T\}$.
In words, the process $\{(\tau_j^T, \gep_j^T)\}_{j}$ under
$\bP_{N,\gd}^T(\,\cdot\,| N\in\tau^T)$ is distributed like
the Markov chain $\{(\tau_j, \gep_j)\}_j$
under $\cP_{\gd, T}(\,\cdot\,|N\in\tau)$.
It is precisely this link with renewal theory that makes our model
amenable to precise estimates.
Note that the law $\cP_{\gd, T}$ carries no explicit
dependence on~$N$.
Another basic relation we are going to use repeatedly is the following one:
\begin{equation} \label{eq:overandover}
	E \left[ e^{H^{T}_{k, \gd}(S)} \, \ind_{\{k \in \tau^T\}} \right]
	\;=\; e^{\phi(\gd, T) k} \, \cP_{\gd, T} \big(k \in \tau \big) \,,
\end{equation}
which is valid for all $k, T \in 2\N$ (cf. equation (2.11) in \cite{cf:CP}).


\smallskip

\subsection{Some asymptotic estimates}
\label{sec:asymp}

We now derive some estimates that will be used throughout the paper.
We start from the asymptotic behavior of $P(\tau_1^T = n)$
as $n\to\infty$. Let us set
\begin{equation} \label{eq:gT}
	g(T) \;:=\; -\log \cos \left( \frac{\pi}{T} \right)
	\;=\; \frac{\pi^2}{2 T^2} + O\left( \frac{1}{T^4} \right)\,,
	\qquad (T \to \infty)\,.
\end{equation}
We then have the following

\medskip
\begin{lemma}\label{th:ineg2}
There exist positive constants $T_0, c_1, c_2, c_3, c_4$
such that when $T > T_0$ the following relations hold
for every $n \in 2\N$:
\begin{gather} \label{eq:boundq}
	\frac{c_1}{\min\{T^3, n^{3/2}\}}\, e^{-g(T) n} \;\le\;
	P(\tau_1^T = n) \;\le\; \frac{c_2}{\min\{T^3, n^{3/2}\}}
	\, e^{-g(T) n} \,,\\
	\label{eq:boundqbis}
	\frac{c_3}{\min\{T, \sqrt{n}\}}\, e^{-g(T) n} \;\le\;
	P(\tau_1^{T} > n) \;\le\; \frac{c_4}{\min\{T, \sqrt{n}\}}
	\, e^{-g(T) n} \,.
\end{gather}
\end{lemma}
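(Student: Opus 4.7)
The plan is to use the spectral decomposition of the simple random walk killed at exit from the strip $\{1,\ldots,T-1\}$ and then to analyse the resulting sum separately in the two regimes $n \gg T^{2}$ and $n \ll T^{2}$.

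First I would reduce $P(\tau_1^T = n)$ to a first-exit-time problem. Starting from $0$, one step of the walk lands at $\pm 1$; by the strong Markov property and symmetry, $P(\tau_1^T = n) = P_1(\sigma_{\{0,T\}} = n-1) = \tfrac{1}{2}[P^{D}_{n-2}(1,1) + P^{D}_{n-2}(1, T-1)]$, where $P^{D}$ is the kernel of the walk killed on hitting $\{0,T\}$. Its eigenvalues are $\cos(\pi k/T)$ with eigenvectors $\phi_{k}(j)=\sqrt{2/T}\sin(\pi k j/T)$, $k=1,\ldots,T-1$. Using $\sin(\pi k(T-1)/T) = (-1)^{k+1}\sin(\pi k/T)$, which cancels the even-$k$ modes after adding the two boundary contributions, one obtains for every even $n$
\begin{equation*}
P(\tau_1^T = n) \;=\; \frac{2}{T}\sum_{\substack{k=1\\ k\ \text{odd}}}^{T-1}\cos\!\Bigl(\tfrac{\pi k}{T}\Bigr)^{n-2}\sin^{2}\!\Bigl(\tfrac{\pi k}{T}\Bigr).
\end{equation*}
An analogous spectral expression for $P(\tau_1^T > n) = \sum_{j=1}^{T-1} P^{D}_{n-1}(1,j)$ follows, using $\sum_{j=1}^{T-1}\sin(\pi k j/T) \asymp T/k$ for odd $k$; this is precisely what upgrades the $\min(T^{3}, n^{3/2})$ of \eqref{eq:boundq} to the $\min(T,\sqrt n)$ of \eqref{eq:boundqbis}.

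Next I would split along $n \ge CT^{2}$ versus $n \le CT^{2}$ with $C$ large. In the first regime only the modes $k=1$ and $k=T-1$ survive: for both of them $|\cos(\pi k/T)| = e^{-g(T)}$, while $|\cos(\pi k/T)| \le \cos(3\pi/T)$ for every other odd $k$, with $-\log\cos(3\pi/T) - g(T) \ge \kappa/T^{2}$ for some fixed $\kappa > 0$. Factoring $e^{-g(T)n}$ out of the sum, the remaining modes contribute at most $e^{-\kappa C}$ times the leading one, so both bounds in \eqref{eq:boundq} reduce to $\asymp \sin^{2}(\pi/T)/T \cdot e^{-g(T)n} \asymp T^{-3}\, e^{-g(T)n}$. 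In the second regime $e^{-g(T)n}$ is of order one, and the target becomes $P(\tau_1^T = n) \asymp 1/n^{3/2}$. For this, I would use the elementary inequalities $c(\pi k/T)^{2} \le \sin^{2}(\pi k/T) \le (\pi k/T)^{2}$ together with $e^{-\alpha k^{2}/T^{2}} \le \cos(\pi k/T) \le e^{-\beta k^{2}/T^{2}}$ (valid for $k \le T/4$) to compare the sum with the Gaussian integral $\int_{0}^{\infty} y^{2}\,e^{-\gamma n y^{2}/T^{2}}\,dy \asymp (T/\sqrt n)^{3}$ via a Riemann-sum argument. Modes with $k$ close to $T$ give the same contribution by the symmetry $k \leftrightarrow T-k$ (which in fact doubles the leading term), while intermediate modes with $k$ of order $T/2$ carry an exponentially small prefactor and are absorbed.

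Finally, the tail estimate \eqref{eq:boundqbis} can either be derived by summing the pointwise bound \eqref{eq:boundq} over $m > n$ — integrating $m^{-3/2}$ contributes an extra $1/\sqrt n$ in the regime $n \ll T^{2}$, while integrating $e^{-g(T)m}$ contributes $1/g(T) \asymp T^{2}$ in the regime $n \gg T^{2}$, leaving $T^{-1}\, e^{-g(T)n}$ after cancellation — or directly from the spectral formula for $P(\tau_1^T > n)$ by the same two-regime analysis. \textbf{The main obstacle} is the sub-$T^{2}$ regime: many eigenmodes contribute simultaneously, and extracting \emph{matching} upper and lower bounds requires precise Riemann-sum control against the Gaussian integral, while the mid-range modes, where the quadratic Taylor expansion of $\cos$ degrades, must be shown to be negligible. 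Tracking the $k\leftrightarrow T-k$ doubling, and checking that it propagates correctly from the pointwise to the tail bound, is the other delicate point.
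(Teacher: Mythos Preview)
Your proposal is correct and follows essentially the same approach as the paper: the paper starts from the identical spectral formula (quoted from Feller rather than rederived), writes the sum as $V_0+V_1+V_2$ according to whether the mode index is $1$, between $2$ and $\lfloor\gep T\rfloor$, or larger, and then bounds $V_1$ above and $V_1+V_2$ below by the same Riemann-sum comparison with the Gaussian integral $\int x^2 e^{-c n x^2}\,\dd x$ that you describe, treating $V_2$ by a crude $\cos^{n}(\pi\gep)$ bound; the tail \eqref{eq:boundqbis} is obtained exactly as you suggest, by summing \eqref{eq:boundq} over $n$. The only organizational difference is that the paper splits by mode index first and discusses the two $n$-regimes afterwards, whereas you split by $n$-regime first; the underlying estimates are the same.
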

\medskip

The proof of Lemma~\ref{th:ineg2} is somewhat technical and is
deferred to Appendix~\ref{sec:lemmaineg2}.
Next we turn to the study of the
renewal process $\big( \{\tau_n\}_{n \ge 0}, \cP_{\gd, T} \big)$.
It turns out that the law of $\tau_1$ under $\cP_{\gd, T}$
is essentially split into two components:
the first one at $O(1)$, with mass $e^\gd$, and the second one at $O(T^3)$,
with mass $1-e^\gd$ (although we do not fully prove these results, it is useful
to keep them in mind). We start with the following estimates
on $\cP_{\gd, T}(\tau_1 = n )$, which follow quite easily from Lemma~\ref{th:ineg2}.

\medskip
\begin{lemma}\label{th:good}
There exist positive constants $T_0, c_1, c_2, c_3, c_4$
such that when $T > T_0$ the following relations hold
for every $m, n \in 2\N \cup \{+\infty\}$ with $m < n$:
\begin{align} \label{eq:boundren}
	\frac{c_1}{\min\{T^3, k^{3/2}\}}\, e^{-(g(T) + \phi(\gd,T)) k} & \;\le\;
	\cP_{\gd, T} (\tau_1 = k) \;\le\; \frac{c_2}{\min\{T^3, k^{3/2}\}}
	\, e^{-(g(T) + \phi(\gd,T)) k} \\
	\label{eq:boundrenbislb}
	\cP_{\gd, T}(m \le \tau_1 < n) & \;\ge\;
	c_3 \, \left( e^{-(g(T) + \phi(\gd,T)) m} -
	e^{-(g(T) + \phi(\gd,T)) n} \right) \\
	\label{eq:boundrenbisub}
	\cP_{\gd, T}(\tau_1 \ge m)
	& \;\le\; c_4 \, e^{-(g(T) + \phi(\gd,T)) m} \,.
\end{align}
\end{lemma}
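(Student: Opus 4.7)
My plan is to derive the three bounds essentially by plugging Lemma~\ref{th:ineg2} into the representation \eqref{eq:taudelta} and then performing a routine summation. The starting observation is that, multiplying the two-sided estimate \eqref{eq:boundq} by $e^{\gd} e^{-\phi(\gd,T) k}$ and absorbing $e^{\gd}$ in the constants, one obtains \eqref{eq:boundren} directly. The key structural fact underlying the two tail estimates is that, upon setting $\alpha = \alpha(\gd,T) := g(T) + \phi(\gd,T)$, the quantity $\alpha$ is strictly positive for $T$ large and $\alpha T^3$ stays bounded away from $0$ and $\infty$. I would establish this by combining \eqref{eq:phineg} with \eqref{eq:gT}: the leading $\pi^2/(2T^2)$ terms cancel, leaving
$$ \alpha \;\sim\; \frac{2\pi^2}{(e^{-\gd}-1)\, T^3} \qquad (T \to \infty). $$
Heuristically, this says that the step law has a characteristic scale of order $T^3$, which matches the polynomial prefactor $1/T^3$ in \eqref{eq:boundren}.

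For the upper bound \eqref{eq:boundrenbisub}, I would apply the upper bound of \eqref{eq:boundren} and split the sum at $k = T^2$. After factoring out $e^{-\alpha m}$, the piece $m \le k < T^2$ is controlled by $e^{-\alpha m}\sum_{k \ge 1} k^{-3/2}$, a convergent $p$-series, while the piece $k \ge T^2$ contributes at most $e^{-\alpha m}/\bigl(T^3(1 - e^{-\alpha})\bigr)$, bounded because $\alpha T^3$ is of order one.

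For the lower bound \eqref{eq:boundrenbislb}, I would use the trivial inequality $\min\{T^3, k^{3/2}\} \le T^3$ inside the lower bound of \eqref{eq:boundren}, giving
$$ \cP_{\gd,T}(m \le \tau_1 < n) \;\ge\; \frac{c_1 \, e^{\gd}}{T^3} \sum_{\substack{m \le k < n \\ k \in 2\N}} e^{-\alpha k} \;=\; \frac{c_1 \, e^{\gd}}{T^3} \cdot \frac{e^{-\alpha m} - e^{-\alpha n}}{1 - e^{-2\alpha}}, $$
where the prefactor $1/\bigl(T^3(1-e^{-2\alpha})\bigr)$ is bounded below by a positive constant since $\alpha T^3 = O(1)$.

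The only non-routine ingredient is the cancellation giving $\alpha \asymp 1/T^3$, for which the second-order expansion \eqref{eq:phineg} is purpose-built; everything else is just manipulation of geometric and $p$-series. I do not anticipate any real obstacle beyond bookkeeping with the constants.
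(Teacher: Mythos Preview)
Your proposal is correct and follows essentially the same approach as the paper: derive \eqref{eq:boundren} directly from \eqref{eq:taudelta} and \eqref{eq:boundq}, establish the asymptotic $g(T)+\phi(\gd,T)\sim \tfrac{2\pi^2}{(e^{-\gd}-1)T^3}$ from \eqref{eq:phineg} and \eqref{eq:gT}, and then sum \eqref{eq:boundren} to obtain \eqref{eq:boundrenbislb} and \eqref{eq:boundrenbisub}. The paper's proof is terser (it simply says ``sum the lower/upper bound''), but your explicit splitting at $k=T^2$ and geometric-series computation are exactly the bookkeeping implicit there.
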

\medskip

\begin{proof}
Equation \eqref{eq:boundren} is an immediate consequence of equations
\eqref{eq:taudelta} and \eqref{eq:boundq}. To prove \eqref{eq:boundrenbislb},
we sum the lower bound in \eqref{eq:boundren} over $k \in 2\N$,
observing that by \eqref{eq:phineg} and \eqref{eq:gT},
for every fixed $\gd < 0$, we have as $T\to\infty$
\begin{equation} \label{eq:gplusphi}
	g(T) \,+\, \phi(\gd, T) \;=\; \frac{4 \pi^2}{2(e^{-\gd}-1)} \,
	\frac{1}{T^3} \, \big( 1 + o(1) \big) \,.
\end{equation}
To get \eqref{eq:boundrenbisub}, we sum the upper bound in \eqref{eq:boundren}
over $k \in 2\N$ and we are done.
\end{proof}
\medskip

Notice that equation \eqref{eq:boundren}, together with
\eqref{eq:gplusphi}, shows indeed that the law of $\tau_1$
has a component at $O(T^3)$, which is approximately geometrically distributed.
Other important asymptotic relations are the following ones:
\begin{align}\label{eq:asET}
    \cE_{\gd, T}(\tau_1) \;&=\; \frac{e^\gd (e^{-\gd}-1)^2}{2 \pi^2} \, T^3
    \;+\; o(T^3)\,,\\
    \label{eq:asET2}
    \cE_{\gd, T}(\tau_1^2) \;&=\; \frac{e^\gd (e^{-\gd}-1)^3}{2 \pi^4} \, T^6
    \;+\; o(T^6)\,,
\end{align}
which are proven in Appendix~\ref{sec:further_estimates}. We stress
that these relations, together with equation \eqref{eq:asQ1bis},
imply that, under $\cP_{\gd, T}$, the time $\hat \tau$ needed to hop
from an interface to a neighboring one is of order $T^3$, and this is
precisely the reason
why the asymptotic behavior of our model has a transition at $T_N \approx N^{1/3}$,
as discussed in the introduction.
Finally, we state an estimate on the renewal function
$\cP_{\gd, T}(n \in \tau)$, which is proven in Appendix~\ref{sec:bound_renewal}.

\medskip
\begin{proposition}\label{th:bound_renewal}
There exist positive constants $T_0, c_1, c_2$
such that for $T > T_0$ and for all $n \in 2\N$ we have
\begin{gather} \label{eq:bound_renewal}
	\frac{c_1}{\min\{n^{3/2}, T^3\}} \;\le\;
	\cP_{\delta,T} (n \in \tau) \;\le\;
	\frac{c_2}{\min\{n^{3/2}, T^3\}}\,.
\end{gather}
\end{proposition}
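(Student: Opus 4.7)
The plan is to establish the two inequalities in \eqref{eq:bound_renewal} separately, in each case distinguishing the regimes $n \le T^3$ and $n > T^3$. Set $u(n) := \cP_{\gd,T}(n\in\tau)$, $p(n) := \cP_{\gd,T}(\tau_1 = n)$, and $\alpha_T := g(T)+\phi(\gd,T)$; by \eqref{eq:gplusphi} we have $\alpha_T \asymp T^{-3}$, so Lemma~\ref{th:good} says the step law behaves like $p(k) \asymp k^{-3/2}$ on $k \le T^3$ and like $p(k) \asymp T^{-3} e^{-\alpha_T k}$ on $k \ge T^3$, while \eqref{eq:asET}--\eqref{eq:asET2} give $\cE_{\gd,T}(\tau_1) \asymp T^3$ and $\cE_{\gd,T}(\tau_1^2) \asymp T^6$. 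Thus, after rescaling time by $T^3$, the process $\{\tau_j/T^3\}_{j}$ has mean and variance of order $1$, uniformly in $T$.

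For the lower bound when $n \le T^3$, I would simply use $u(n) \ge p(n)$ together with \eqref{eq:boundren}: since $\alpha_T n \le c$, the exponential factor is bounded below by a positive constant, and this gives $u(n) \ge c' n^{-3/2}$. For $n > T^3$, I would appeal to a uniform-in-$T$ version of the Blackwell/Stone local renewal theorem for the rescaled walk $\{\tau_j/T^3\}_{j}$: since its moments are of order~$1$, the renewal function converges to $2/\mu_T \asymp T^{-3}$ with a rate depending only on the rescaled variance, yielding $u(n) \ge c/T^3$ for $n$ beyond some fixed multiple of~$T^3$. The transition window $n \in [T^3,\,K T^3]$ can be covered directly using \eqref{eq:boundren}, since then $p(n) \asymp T^{-3}$ and so $u(n) \ge p(n) \ge c/T^3$.

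The upper bound when $n > T^3$ follows again from the same quantitative renewal-theorem argument, giving $u(n) \le C/T^3$ uniformly. The main obstacle is the upper bound $u(n) \le C/n^{3/2}$ on $n \le T^3$, uniformly in $T$. A naive induction on $M_n := \sup_{2 \le m \le n} m^{3/2} u(m)$ via the renewal equation $u(n) = p(n) + (p*u)(n)$ does not close, since $\|p\|_1 = 1$ prevents any contraction. To circumvent this I would expand $u = \sum_{j \ge 0} p^{*j}$ and estimate each convolution power using the pointwise bound on $p$ together with the elementary inequality $\sum_{k=1}^{n-1} k^{-3/2}(n-k)^{-3/2} \le C n^{-3/2}$; the summability of the resulting constants in $j$, with uniformity in $T$, is the delicate point. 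I would secure it either through a Laplace-transform/saddle-point analysis at $\lambda \asymp 1/n$, via $\hat u(\lambda) = (1-\hat p(\lambda))^{-1}$, or by exploiting that $p$ has a uniform positive mass on jumps of size $\asymp T^3$ (cf.\ \eqref{eq:boundrenbislb}), so that the "effective" first-return law for trajectories constrained to $[0,n]$ with $n \le T^3$ is sub-stochastic with uniform defect $\gamma > 0$, producing the contraction needed to close the bootstrap.
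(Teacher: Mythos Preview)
Your treatment of the regime $n \le T^3$ is essentially the paper's proof. The lower bound via $u(n)\ge p(n)$ and \eqref{eq:boundren} is exactly what they do. For the upper bound, your second suggestion---exploiting that $p$ places a uniform positive mass on jumps of size $\asymp T^3$, so that the restriction of $p$ to $[0,T^3]$ is sub-stochastic with uniform defect $1-\ga>0$---is precisely the paper's mechanism. They implement it by setting $K_k(n):=\cP_{\gd,T}(\tau_k=n)$, observing $\sum_{n\le T^3}K_1(n)\le\ga<1$ from \eqref{eq:boundrenbislb}--\eqref{eq:gplusphi}, and proving by induction (splitting the convolution at $n/2$) that $K_k(n)\le k^3\ga^{k-1}C/\min\{n^{3/2},T^3\}$ for all $n\le T^3$; summing in $k$ closes the bound. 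Your first suggestion (Laplace transform / saddle point) is not pursued and would be messier.

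The regime $n>T^3$ is where your proposal diverges from the paper and becomes incomplete. You appeal to a ``uniform-in-$T$ Blackwell/Stone local renewal theorem'' for the rescaled walk $\{\tau_j/T^3\}_j$. Such a statement is plausible but is not an off-the-shelf result: the step law depends on $T$, is lattice with mesh $2/T^3$, and you need convergence to $1/\cE_{\gd,T}(\tau_1)$ with an error controlled uniformly in $T$. Proving this would essentially require redoing Stone's argument with explicit constants, which is nontrivial. The paper avoids this entirely by \emph{bootstrapping} from the already-established $n\le T^3$ bounds. For the lower bound they write
\[
u(n)\;\ge\;\sum_{k=n-T^3}^{n-1}\cP_{\gd,T}(\gamma_{n-T^3}=k)\,u(n-k)\;\ge\;\frac{c}{T^3}\,\cP_{\gd,T}\big(\tau\cap[n-T^3,n-1]\ne\emptyset\big),
\]
since $n-k\le T^3$, and then show the last probability is bounded below by comparing it to its complement via \eqref{eq:boundrenbislb}--\eqref{eq:boundrenbisub} and \eqref{eq:gplusphi}. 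For the upper bound they show $\cP_{\gd,T}(\tau\cap[n-T^3,n-T^2]\ne\emptyset\mid n\in\tau)\ge c$ by a similar ratio comparison, and combine with the upper bound on $u(n-k)$ for $T^2\le n-k\le T^3$. This is entirely elementary and self-contained, whereas your route leaves the uniform renewal estimate as a black box.
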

\medskip

Note that the large $n$ behavior of \eqref{eq:bound_renewal}
is consistent with the classical renewal
theorem, because $1/\cE_{\gd,T}(\tau_1) \approx T^{-3}$, by \eqref{eq:asET}.
One could hope to refine this estimate,
e.g., proving that for $n \gg T^3$ one has
$\cP_{\delta,T} (n \in \tau) = (1+o(1))/\cE_{\gd,T}(\tau_1)$:
this would allow strengthening part~(\ref{part:1}) of Theorem~\ref{th:main}
to a full convergence in distribution
$S_N/(C_\gd \sqrt {N/T_N}) \Longrightarrow \cN(0,1)$.
It is actually possible
to do this for $n \gg T^6$, using the ideas and techniques
of~\cite{cf:Ney}, thus strengthening Theorem~\ref{th:main}
in the restricted regime $T_N \ll N^{1/6}$ (we omit the details).


\medskip
\section{Proof of Theorem~\ref{th:main}: part (\ref{part:1})}

\label{sec:parti}

We are in the regime when $N/T_N^3\to \infty$ as $N\to \infty$.
The scheme of this proof is actually very
similar to the one of the proof of part (i) of Theorem 2 in~\cite{cf:CP}.
However, more technical difficulties
arise in this context, essentially because, in the depinning case ($\delta<0$), the density of contact between the polymer
and the interfaces vanishes as $N\to \infty$,
whereas it is strictly positive in the
pinning case ($\delta>0$). For this reason,
it is necessary to display this proof in detail.

Throughout the proof we set
$v_\delta=(1-e^\delta)/2$ and $k_N=\lfloor N/\cE_{\delta,T_N}(\tau_1)\rfloor$.
Recalling \eqref{jump} and \eqref{eq:L}, we set $Y_0^{T_N}=0$ and
$Y_i^{T_N}=\gep_1^{T_N}+\dots+\gep_i^{T_N}$ for $i\in\{1,\dots,L_{N,T_N}\}$.
Plainly, we can write
\begin{equation}\label{eq:simpli}
	S_N \;=\; Y^{T_N}_{L_{N,T_N}} \cdot T_N
	\,+\, s_N \,, \qquad \text{with} \quad  |s_N|\,<\,T_N \,.
\end{equation}
In view of equation \eqref{eq:asET}, this relation shows
that to prove \eqref{eq:infinite} we can equivalently
replace $S_N/(C_\delta \sqrt{N/T_N})$ with
$Y_{L_{N,T_N}}^{T_N}/\sqrt{v_\delta k_N}$.


\smallskip
\subsection{Step 1}

\label{sec:s1}

Recall \eqref{eq:defPdeltaT} and set $Y_n=\gep_1+\dots+\gep_n$ for all $n\geq 1$.
The first step consists in proving that for all $a<b$ in $\overline{\R}$
\begin{equation}\label{step1}
    \lim_{N\to \infty}\; \cP_{\delta,T_N}\Big(a<\frac{Y_{k_{N}}}{\sqrt{v_\delta k_{N}}}
    \leq b\Big) \;=\; P(a<Z\leq b)\,,
\end{equation}
that is, under $\cP_{\delta,T_N}$ and as $N\to \infty$ we have
$Y_{k_{N}}/\sqrt{v_\delta k_{N}} \Longrightarrow Z$, where ``$\Longrightarrow$''
denotes convergence in distribution.

The random variables $(\gep_1,\dots,\gep_N)$, defined under $\cP_{\delta, T_N}$, are symmetric and i.i.d.\;. Moreover, they take
their values in $\{-1,0,1\}$, which together with \eqref{eq:asQ1bis} entails
\begin{equation}\label{eq:nowecant}
	\cE_{\delta,T_N}(|\varepsilon_1|^3) \;=\;
	\cE_{\delta,T_N}((\varepsilon_1)^2) \;\longrightarrow\;
	v_\delta \qquad \text{as} \ N\to \infty.
\end{equation}
Observe that $k_N\to \infty$ as $N\to \infty$ and
$\cE_{\delta,T_N}(\tau_1) = O(T_N^3)$, by \eqref{eq:asET}.
Thus, we can apply the Berry Esse\`en Theorem
that directly proves \eqref{step1} and completes this step.\qed


\smallskip
\subsection{Step 2}

\label{sec:s2}

Henceforth, we fix a sequence of integers $(V_N)_{N\geq 1}$ such that
$T_N^3 \ll V_N \ll N$.
In this step we prove that, for all $a<b \in \overline{\R}$, the
following convergence occurs, uniformly in $u\in\{0,\dots, 2V_N\}$:
\begin{equation}\label{step2}
    \lim_{N\to \infty} \; \cP_{\delta,T_N}
    \Bigg(a<\frac{Y_{L_{N-u}}}{
    \sqrt{v_\delta k_N}}\leq b \Bigg) \;=\; P(a<Z\leq b)\,.
\end{equation}

To obtain \eqref{step2}, it is sufficient to prove that,
as $N\to \infty$ and under the law $\cP_{\delta, T_N}$,
\begin{equation}\label{eq:imp}
U_N:=\frac{Y_{k_N}}{\sqrt{v_\delta k_N}} \Longrightarrow Z \quad \quad
\text{and} \quad \quad
 G_N:=\sup_{u\in\{0,\dots,2V_N\}}\bigg|\frac{Y_{L_{N-u}}-Y_{k_N}}{\sqrt{v_\delta k_N}}\bigg|\Longrightarrow 0 \,.
\end{equation}
Step 1 gives directly the first relation in \eqref{eq:imp}.
To deal with the second relation, we must show that
$\cP_{\delta,T_N}(G_N\geq \gep)\to 0$ as $N\to \infty$, for all $\gep>0$.
To this purpose, notice that
$\{G_N\geq \gep\}\subseteq A_\eta^N \cup B_{\eta,\gep}^N$, where for $\eta > 0$
we have set
\begin{align}
A_\eta^N:&=\big\{L_N-k_N\geq \eta k_N\big\}
\cup\big\{L_{N-2V_N}-k_N\leq -\eta k_N\big\}\\
B_{\eta,\gep}^N:&= \Bigg\{\sup
\bigg\{\bigg|\frac{Y_{k_N+i}-Y_{k_N}}{\sqrt{v_\delta k_N}}\bigg|\,,
\ i \in \{-\eta k_N,\dots,\eta k_N\} \bigg\} \geq \gep\Bigg\} \,.
\end{align}
Let us focus on $\cP_{\delta,T_N}(A_\eta^N)$.
Introducing the centered variables
$\tilde{\tau_k} := \tau_k - k \cdot \cE_{\delta,T_N}(\tau_1)$,
for $k \in \N$, by the Chebychev inequality we can write
(assuming that $(1-\eta)k_N \in \N$ for notational convenience)
\begin{align}\label{yeswecan}
 	\nonumber
	\cP_{\delta, T_N} & \big(L_{N-2V_N}-k_N<-\eta k_N\big)
    \;=\; \cP_{\gd, T_N} \big( \tau_{(1-\eta)k_N} > N-2V_N \big) \\
    \nonumber
    &\;=\; \cP_{\gd, T_N} \big( \tilde\tau_{(1-\eta)k_N} > N- 2V_N
    -(1-\eta) k_N \cE_{\gd,T}(\tau_1) \,=\, \eta N- 2V_N \big) \\
    & \;\le\; \frac{(1-\eta) k_N \var_{\gd, T_N}(\tau_1)}{(\eta N - 2V_N)^2} \;\le\;
    \frac{N\; \var_{\delta,T_N}(\tau_1)}{(\eta N-2V_N)^2\, \cE_{\delta,T_N}(\tau_1)}\,.
\end{align}
With the help of the estimates in \eqref{eq:asET}, \eqref{eq:asET2},
we can assert that
$\var_{\delta,T_N}(\tau_1)/\cE_{\delta,T_N}(\tau_1) = O(T_N^3)$.
Since $N \gg V_N$ and $N \gg T_N^3$, the r.h.s. of~\eqref{yeswecan}
vanishes as $N \to \infty$. With a similar technique, we prove that
$\cP_{\delta, T_N} \big(L_N-k_N>\eta k_N\big)\to 0$
as well, and consequently $\cP_{\delta,T_N}(A_\eta^N)\to 0$ as $N\to \infty$.

At this stage it remains to show that, for every fixed $\gep>0$,
the quantity  $\cP_{\gd, T_N} \big(B_{\eta,\gep}^N \big)$
vanishes as $\eta\to 0$, {\sl uniformly in $N$}. This holds true
because $\{Y_n\}_n$ under $\cP_{\gd, T_N}$ is a symmetric random walk,
and therefore $\{(Y_{k_N + j} - Y_{k_N})^2\}_{j \ge 0}$
is a submartingale (and the same with $j \mapsto  -j$).
Thus, the maximal inequality yields
\begin{equation}\label{eq:soutcha}
    \cP_{\gd, T_N} \big( B_{\eta,\gep}^N \big) \;\le\; \frac{2}{\gep} \,
    \frac{\cE_{\gd, T_N} \big( (Y_{k_N + \eta k_N} - Y_{k_N})^2 \big)}
    {v_\delta k_N} \;\le\;
    \frac{2\, \eta\, \cE_{\gd, T_N}(\gep_1^2)}{\gep v_\delta}
    \;\le\; \frac{2 \, \eta}{\gep \, v_\gd} \,.
\end{equation}
We can therefore assert that the r.h.s in \eqref{eq:soutcha} tends
to $0$ as $\eta\to 0$, uniformly in $N$. This completes the step.\qed



\smallskip
\subsection{Step 3}

\label{sec:s3}

Recall that $k_N=\lfloor N/\cE_{\delta,T_N}(\tau_1)\rfloor$.
In this step we assume for simplicity that $N\in2\N$,
and we aim at switching from the free measure $\cP_{\delta,T_N}$ to
$\cP_{\delta,T_N}\big(\cdot\,\big|\, N\in\tau \big)$.
More precisely, we want to prove that there exist two constants $0<c_1< c_2<\infty$
such that for all $a<b \in\overline{\R}$ there exists $N_0 > 0$
such that for $N\ge N_0$ and for all $u \in \{0,\dots,V_N\} \cap 2\N$
\begin{equation}\label{step3}
  	c_1 \, P(a<Z\leq b) \;\le\; \cP_{\delta,T_N}
    \bigg( a < \frac{Y_{L_{N-u}}}{
    \sqrt{v_\delta k_N}} \leq b \,\bigg|\, N-u \in\tau \bigg)
    \;\le\; c_2 \, P(a<Z\leq b) \,.
\end{equation}

A first observation is that we can safely replace
${L_{N-u}}$ with ${L_{N-u-T_N^3}}$ in \eqref{step3}.
To prove this, since $k_N \to \infty$, the following bound is sufficient:
for every $N, M \in 2\N$
\begin{equation} \label{eq:tec_toprove}
	\sup_{u \in \{0, \ldots, V_N\} \cap 2\N} \;
	\cP_{\delta,T_N} \Big( \big| Y_{L_{N-u}} - Y_{L_{N-u-T_N^3}} \big| \ge M
	\,\Big|\, N-u \in \tau \Big)
    \;\le\; \frac{(const.)}{M} \,.
\end{equation}
Note that the l.h.s. is bounded above by
$\cP_{\delta,T_N} \big( \# \big\{ \tau \cap[N-u-T_N^3, N-u) \big\} \ge M
\,\big|\, N-u \in \tau \big)$. By time-inversion and the renewal property
we then rewrite this as
\begin{equation}\label{eq:barabao}
\begin{split}
	& \cP_{\delta,T_N} \big( \# \big\{ \tau \cap(0, T_N^3] \big\} \ge M
    \,\big|\, N-u \in \tau \big) \;=\;
    \cP_{\delta,T_N} \big( \tau_M \le T_N^3 \,\big|\, N-u \in \tau \big) \\
    & \qquad \;\le\; \sum_{n=1}^{T_N^3} \cP_{\delta,T_N} \big( \tau_M = n \big)
    \cdot \frac{\cP_{\delta,T_N} \big( N-u-n \in \tau \big)}
    {\cP_{\delta,T_N} \big( N-u \in \tau \big)} \,.
\end{split}
\end{equation}
Recalling that $N \gg V_N \gg T_N^3$ and using the estimate
\eqref{eq:bound_renewal}, we see that the ratio in the r.h.s. of
\eqref{eq:barabao} is bounded above by some constant, uniformly
for $0 \le n \le T_N^3$ and $u \in \{0, \ldots, V_N\} \cap 2\N$.
We are therefore left with estimating $\cP_{\delta,T_N} \big( \tau_M \le T_N^3 \big)$.
Recalling the definition $\tilde \tau_k := \tau_k - k \cdot \cE_{\gd, T}(\tau_1)
\sim \tau_k - c k T^3$ as $T \to \infty$, where $c > 0$ by \eqref{eq:asET},
it follows that for large $N\in\N$ we have
\begin{equation*}
\begin{split}
	& \cP_{\gd, T_N}(\tau \cap [0, T_N^3] \ge M) \;=\;
	\cP_{\gd, T_N}(\tau_M \le T_N^3) \\
	& \quad \;\le\; \cP_{\gd, T_N} \bigg( \tilde \tau_M \le
	-\frac c2 \, M \,T_N^3 \bigg) \;\le\;
	\frac{4M \, \var_{\gd,T_N}(\tau_1)}{c^2 \, M^2 \, T_N^6}
	\;\le\; \frac{(const.)}{M}\,,
\end{split}
\end{equation*}
having applied the Chebychev inequality and \eqref{eq:asET2}. This proves
\eqref{eq:tec_toprove}.


Let us come back to \eqref{step3}. By summing over the last point in $\tau$
before $N-u-T_N^3$ (call it $N-u-T_N^3-t$) and the first point in $\tau$
after $N-u-T_N^3$ (call it $N-u-T_N^3+r$), using the Markov property
we obtain
\begin{align} \label{eq:long}
\begin{split}
    & \cP_{\delta,T_N} \Bigg(a< \frac{Y_{L_{N-u-T_N^3}}}{ \sqrt{v_\delta k_N}}
    \leq b \,\Bigg |\, N-u\in\tau \Bigg)\\
    & \ \;=\; \sum_{t=0}^{N-T_N^3-u} \cP_{\gd, T_N}
    \Bigg( a<\frac{Y_{L_{N-u-T_N^3-t}}}{\sqrt{v_\delta k_N}} \leq b \,,\,
    N-u-T_N^3-t \in \tau \Bigg) \cdot \cP_{\gd, T_N}\big( \tau_1 > t \big)
    \cdot \Theta^u_{\gd,N}(t) \,,
\end{split}
\end{align}
where $\Theta^u_{\gd,N}$ is defined by
\begin{equation}\label{theta}
    \Theta^u_{\gd,N}(t) \;:=\; \frac{\sum_{r=1}^{T_N^3}
    \cP_{\gd, T_N}\big(\tau_1 = t+r\big) \cdot \cP_{\gd, T_N}
    \big(T_N^3-r \in \tau \big)}
    {\cP_{\gd, T_N} \big( N-u\in\tau \big) \cdot
   \cP_{\gd, T_N}\big( \tau_1>t \big)}\,.
\end{equation}
Let us set $\cI_N^u:=\{0,\dots,N-u-T_N^3\}$.
Notice that replacing $\Theta^u_{\gd,N}(t)$ by the constant $1$
in the r.h.s. of \eqref{eq:long}, the latter becomes equal to
\begin{equation}\label{eq:intew}
\cP_{\delta,T_N}
\bigg(a<\frac{Y_{L_{N-u-T_N^3}}}{\sqrt{v_\delta k_N}}\leq b \bigg).
\end{equation}
Since $u + T_N^3 \le 2 V_N$ for large $N$ (because
$V_N \gg T_N^3$), equation \eqref{step2} implies that
\eqref{eq:intew} converges as $N\to\infty$ to $P(a < Z \le b)$,
uniformly for $u \in \{0, \ldots, V_N\} \cap 2\N$.
Therefore, equation \eqref{step3} will be proven (completing this step)
once we show that there exists $N_0$ such that
$\Theta^u_{\gd,N}(t)$ is bounded from above and below by two constants
$0<l_1<l_2<\infty$, for $N \ge N_0$ and for
all $u\in\{0,\dots,V_N\}$ and $t\in\cI_N^u$.

Let us set $K_N(n) := \cP_{\gd, T_N}(\tau_1 = n)$ and $u_N(n) :=
\cP_{\gd, T_N}(n \in \tau)$.
The lower bound is obtained by restricting the sum in the numerator of \eqref{theta} to
$r\in\{1,\dots, T_N^3/2\}$. Recalling that $N\gg V_N \gg T_N^3$,
and applying the upper (resp. lower) bound in \eqref{eq:bound_renewal}
to $u_N(N-u)$ (resp. $u_N(T_N^3-r)$), we have that for large $N$,
uniformly in $u\in\{0,\dots,V_N\}$ and $t\in \cI_N^u$,
\begin{equation}\label{eq:etec}
    \Theta^u_{\gd,N}(t) \;\geq \;
    \frac{\sum_{r=1}^{T_N^3/2}
    K_N(t+r)\cdot u_N
    \big(T_N^3-r\big)}
    {u_N\big( N-u\big) \cdot
   \sum_{j=1}^{\infty} K_N(t+j)}
   \;\geq\; \frac{c_1}{c_2} \cdot \frac{\sum_{r=1}^{T_N^3/2}
    K_N(t+r)}
    {\sum_{j=1}^{\infty} K_N(t+j)}\,.
\end{equation}
Then, we use \eqref{eq:boundrenbislb} to bound from below the numerator
in the r.h.s. of \eqref{eq:etec} and we use \eqref{eq:boundrenbisub}
to bound from above its denominator. This allows to write
\begin{equation}\label{eq:etec2}
    \Theta^u_{\gd,N}(t) \;\geq \;
    \frac{c_1\, c_3\, (1-e^{-(g(T_N)+\phi(\delta,T_N))\frac{T_N^3}{2}})}{c_2\, c_4}\,.
\end{equation}
Moreover, \eqref{eq:gplusphi} shows that there exists $m_\delta>0$ such
that $g(T_N)+\phi(\delta,T_N)\sim m_\delta/T_N^3$ as $N\to \infty$, which
proves that the r.h.s. of \eqref{eq:etec2} converges to a constant $c>0$
as $N$ tends to $\infty$. This completes the proof of the lower bound.

The upper bound is obtained by splitting the r.h.s. of \eqref{theta} into
\begin{equation}\label{thetap}
    R_N+D_N\;:=\; \frac{\sum_{r=1}^{T_N^3/2}
    K_N(t+r) \cdot u_N(T_N^3-r)}
    {u_N\big( N-u\big) \cdot
   \sum_{j=1}^{\infty} K_N(t+j)}\,+\,\frac{\sum_{r=1}^{T_N^3/2}
    K_N(t+T_N^3-r) \cdot u_N(r)}
    {u_N\big( N-u\big) \cdot
   \sum_{j=1}^{\infty} K_N(t+j)}.
\end{equation}
The term $R_N$ can be bounded from above by a constant by simply applying
the upper bound in \eqref{eq:bound_renewal}
to $u_N(T_N^3-r)$ for all $r\in\{1,\dots,T_N^3/2\}$ and the lower bound to
$u_N(N-u)$.
To bound $D_N$ from above, we use the upper bound in \eqref{eq:boundren},
which, together with the fact that
$g(T_N)+\phi(\delta,T_N)\sim m_\delta/T_N^3$,
shows that there exists $c>0$ such that for $N$ large enough and
$r\in\{1,\dots,T_N^3/2\}$ we have
\begin{equation}\label{eq:theta2}
	K_N(t+T_N^3-r)\leq \frac{c}{T_N^3}
	\, e^{-(g(T_N) + \phi(\gd,T_N))\, t}.
\end{equation}
Notice also that by \eqref{eq:boundrenbislb} we can assert that
\begin{equation}\label{eq:etac}
\sum_{j=1}^{\infty} K_N(t+j)\geq c_3  e^{-(g(T_N) + \phi(\gd,T_N))\,t}.
\end{equation}
Finally, \eqref{eq:theta2}, \eqref{eq:etac} and the fact that $u_N(N-u)\geq c_1/T_N^3$ for all
$u\in\{0,\dots,V_N\}$ (by \eqref{eq:bound_renewal}) allow to write
\begin{equation}\label{theta3}
    D_N\;\leq\; \frac{c \sum_{r=1}^{T_N^3/2} u_N(r)}
    {c_1 c_3}.
\end{equation}
By applying the upper bound in \eqref{eq:bound_renewal}, we can check easily that $\sum_{r=1}^{T_N^3/2} u_N(r)$ is bounded from above uniformly in $N\geq 1$ by a constant. This completes the proof of the step.\qed


\smallskip
\subsection{Step 4}

\label{sec:s4}

In this step we complete the proof of Theorem~\ref{th:main} (\ref{part:1}),
by proving equation \eqref{eq:infinite}, that we
rewrite for convenience: there exist $0<c_1< c_2<\infty$
such that for all $a<b \in\overline{\R}$ and for large $N\in2\N$
(for simplicity)
\begin{equation}\label{step4}
    c_1 \, P(a<Z\leq b) \;\leq\; \bP_{N,\delta}^{T_N}
    \Bigg( a<\frac{Y^{T_N}_{L_N}}{\sqrt{v_\delta k_N}}\leq b \Bigg)
    \;\leq\; c_2 \, P(a<Z\leq b) \,.
\end{equation}

We recall \eqref{jump} and we start summing over the location $\mu_N := \tau^{T_N}_{L_{N,T_N}}$ of
the last point in $\tau^{T_N}$ before $N$:
\begin{equation}\label{eq:trun}
    \bP_{N,\delta}^{T_N} \Bigg(
    a<\frac{Y_{L_{N,T_N}}^{T_N}}{\sqrt{v_\delta k_N}}\leq b \Bigg)
    \;=\; \sum_{\ell = 0}^N  \; \bP_{N,\delta}^{T_N}
    \Bigg(a<\frac{Y_{L_{N,T_N}}^{T_N}}{\sqrt{v_\delta k_N}}\leq b \,\bigg|\, \mu_N = N-\ell \Bigg)
     \ \cdot \bP_{N,\delta}^{T_N}\big( \mu_N = N-\ell \big)\,.
\end{equation}
Of course, only the terms with $\ell$ even are non-zero.
We want to show that the sum in the r.h.s. of \eqref{eq:trun}
can be restricted to $\ell \in\{0,\dots,V_N\}$.
To that aim, we need to prove that
$\sum_{\ell =V_N }^N \bP_{N,\delta}^{T_N}\big( \mu_N = N-\ell \big)$ tends to $0$
as $N\to \infty$. We start by displaying a lower bound
on the partition function $Z_{N,\gd}^{T_N}$.

\smallskip
\begin{lemma}\label{le:boundzn}
There exists a constant $c>0$ such that for $N$ large enough
\begin{equation}\label{eq:lemest}
	Z_{N, \gd}^{T_N} \;\geq\;
	\frac{c}{T_N} \;  e^{\phi(\gd,T_N) N} \,.
\end{equation}
\end{lemma}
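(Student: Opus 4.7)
My plan is to start from the natural decomposition of $Z^{T_N}_{N,\gd}$ obtained by conditioning on the location $\mu_N := \tau^{T_N}_{L_{N,T_N}}$ of the last visit of $S$ to $T_N\Z$ before time $N$. Applying the Markov property at time $k$ (together with the translation invariance of $T_N\Z$ under shifts in $T_N$) and invoking~\eqref{eq:overandover}, one gets the identity
\begin{equation*}
Z^{T_N}_{N,\gd} \;=\; \sum_{k=0}^{N} e^{\phi(\gd, T_N)\, k}\, \cP_{\gd, T_N}(k \in \tau)\, P(\tau_1^{T_N} > N-k),
\end{equation*}
with the convention $\cP_{\gd, T_N}(0\in\tau) := 1$. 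The strategy is then to retain enough terms in this sum to produce the announced factor $1/T_N$, the precise choice depending on whether $N$ is small or large compared with $T_N^3$.

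First I would treat the easy regime $N \le 2\, T_N^3$ by keeping only the term $k = 0$, which already gives $Z^{T_N}_{N,\gd} \ge P(\tau_1^{T_N} > N)$. The lower bound in~\eqref{eq:boundqbis} together with $\min\{T_N,\sqrt N\} \le T_N$ yields $P(\tau_1^{T_N} > N) \ge (c_3/T_N)\, e^{-g(T_N) N}$. Writing $e^{-g(T_N) N} = e^{\phi(\gd, T_N) N}\, e^{-(g(T_N)+\phi(\gd, T_N))\,N}$ and appealing to~\eqref{eq:gplusphi}, the exponent $(g(T_N) + \phi(\gd, T_N))\, N = O(N/T_N^3) = O(1)$ in this range, so the extra exponential is bounded below by a positive constant and~\eqref{eq:lemest} follows at once.

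For the complementary regime $N > 2\, T_N^3$ the single-term bound collapses, since $e^{-(g+\phi) N}$ can be exponentially small. Here I would restrict the sum to those $k$ for which $n := N-k$ is even and lies in $[T_N^2,\, T_N^3]$. For every such $k$ one has $k \ge N - T_N^3 \ge T_N^2$, so Proposition~\ref{th:bound_renewal} gives $\cP_{\gd, T_N}(k \in \tau) \ge c/T_N^3$; moreover $n \ge T_N^2$ forces $\min\{T_N,\sqrt n\} = T_N$, so~\eqref{eq:boundqbis} gives $P(\tau_1^{T_N} > n) \ge (c/T_N)\, e^{-g(T_N) n}$. Factoring the exponentials as
\begin{equation*}
e^{\phi(\gd, T_N) k}\, e^{-g(T_N) n} \;=\; e^{\phi(\gd, T_N) N}\, e^{-(g(T_N) + \phi(\gd, T_N))\, n},
\end{equation*}
and using~\eqref{eq:gplusphi} once more to bound $(g(T_N) + \phi(\gd, T_N))\, n = O(1)$ uniformly for $n \le T_N^3$, each retained term is at least $c'\, T_N^{-4}\, e^{\phi(\gd, T_N) N}$. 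Summing over the $\sim T_N^3/2$ admissible values of $n$ gives a total of order $T_N^{-1}\, e^{\phi(\gd, T_N) N}$, as required.

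The main subtlety I anticipate lies precisely in the choice of the summation window in this second regime: a naive single-term bound based on $k = N$ (using $\cP_{\gd, T_N}(N \in \tau) \ge c/T_N^3$) is short of the target by a factor $T_N^2$, and to recover this factor one must average the renewal mass over $\sim T_N^3$ possible last-renewal locations. Crucially, this is exactly the window on which both Proposition~\ref{th:bound_renewal} and~\eqref{eq:boundqbis} hold in their non-degenerate forms and on which the ``correction'' $(g(T_N) + \phi(\gd, T_N))\, n$ stays bounded; the delicate cancellation $g(T_N) + \phi(\gd, T_N) = O(1/T_N^3)$ from~\eqref{eq:gplusphi} is the real engine of the proof.
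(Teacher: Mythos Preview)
Your proof is correct and follows essentially the same route as the paper: both start from the last-renewal decomposition, then restrict the sum to the window $N-k\in[T_N^2,T_N^3]$ and combine the lower bounds in~\eqref{eq:boundqbis} and~\eqref{eq:bound_renewal} with the cancellation~\eqref{eq:gplusphi} to extract the factor $1/T_N$. Your preliminary case $N\le 2T_N^3$ is superfluous here since the lemma lives in Section~\ref{sec:parti} where $T_N^3\ll N$, but it is harmless (and the paper simply evaluates the geometric sum rather than bounding each term by a constant and counting, which comes to the same thing).
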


\begin{proof}
Summing over the location of $\mu_N$ and
using the Markov property, together with \eqref{eq:overandover}, we have
\begin{align}\label{eq:egalit}
\nonumber	Z_{N,\gd}^{T_N} & \;=\; E\Big[ e^{H^{T_N}_{N,\gd}(S)} \Big]
	\;=\; \sum_{r=0}^N  E\Big[ e^{H^{T_N}_{N,\gd}(S)} \,
	\ind_{\{\mu_N = r \}} \Big]\\
\nonumber	& \;=\; \sum_{r=0}^N  E\Big[ e^{H^{T_N}_{r,\gd}(S)} \,
	\ind_{\{r \in \tau^{T_N}\}} \Big] \, P(\tau_1^{T_N} > N-r)\\
	& \;=\; \sum_{r=0}^N  e^{\phi(\gd,T_N) r} \,
	\cP_{\gd, T_N} (r \in \tau)\, P(\tau_1^{T_N} > N-r) \,.
\end{align}
From \eqref{eq:egalit} and the lower bounds
in \eqref{eq:boundqbis} and \eqref{eq:bound_renewal}, we obtain for $N$ large enough
\begin{equation}\label{eq:egal2}
	Z_{N,\gd}^{T_N} \;\ge\; (const.)\;
	e^{\phi(\gd,T_N) N} \sum_{r=0}^N\;
	\frac{e^{-[\phi(\delta,T_N)+g(T_N)](N-r)}}
	{\min\{\sqrt{N-r+1}, T_N\}\,\min\{(r+1)^{3/2}, T_N^3 \}} \,.
\end{equation}
At this stage, we recall that
$\phi(\gd,T)+g(T) = m_\gd/T^3 + o(1/T^3)$ as $T\to\infty$,
with $m_\gd > 0$, by \eqref{eq:gplusphi}. Since $T_N^3 \ll N$,
we can restrict the sum in \eqref{eq:egal2}
to $r\in\{N-T_N^3,\dots,N-T_N^2\}$, for large $N$, obtaining
\begin{align}\label{eq:egal3}
	Z_{N,\gd}^{T_N} \;\ge\; (const.)\,\frac{e^{\phi(\gd,T_N) N}}{T_N^4} \,
	\sum_{r=N-T_N^3}^{N-T_N^2}\, e^{-\big( \frac{m_\delta}{T_N^3}+
	o\big( \frac{1}{T_N^3} \big) \big)(N-r)}
	\;\geq\; (const.') \,
	\frac{e^{\phi(\gd,T_N) N}}{T_N} \,,
\end{align}
because the geometric sum gives a contribution of order $T_N^3$.
\end{proof}

We can now bound from above (using the Markov property and \eqref{eq:overandover})
\begin{align}\label{eq:calc}
\nonumber\sum_{l=0}^{N-V_N} \bP_{N,\delta}^{T_N}(\mu_N & =\ell)
\;=\;\sum_{\ell=0}^{N-V_N}
 \frac{\bE \big( \exp\big( H_{\ell, \gd}^{T_N}(S) \big) \ind_{\{\ell \in \tau\}}\big)
 \cdot \bP \big( \tau_1 > N-\ell \big)}{Z_{N,\delta}^{T_N}}\\
\nonumber &\;=\;\sum_{\ell=0}^{N-V_N}
 \frac{\cP_{\delta,T_N}(\ell\in\tau)\  e^{\phi(\delta, T_N)\ell}\ \bP \big( \tau_1 > N-\ell \big)}{Z_{N,\delta}^{T_N}}\\
 & \;\leq\; (const.)\, \sum_{\ell=0}^{N-V_N}
 \frac{T_N}{\min\{(\ell+1)^{3/2}, T_N^3\}}\cdot \frac{e^{-[\phi(\delta, T_N)+g(T_N)] (N-\ell)}}{\min\{\sqrt{N-\ell},T_N\}} \,,
\end{align}
where we have used Lemma \ref{le:boundzn} and the upper bounds in \eqref{eq:boundqbis} and \eqref{eq:bound_renewal}.
For notational convenience we set $d(T_N)=\phi(\delta, T_N)+g(T_N)$.
Then, the estimate \eqref{eq:gplusphi} and the fact that $V_N\gg T_N^3$ imply that
\begin{align}\label{eq:ratic}
\begin{split}
\sum_{\ell=0}^{N-V_N} \bP_{n,\delta}^{T_N}(\mu_N=\ell)&\;\leq\;
(const.)\, e^{-d(T_N) V_N}
\sum_{\ell=0}^{N-V_N}
\frac{e^{-d(T_N) (N-V_N-\ell)}}{\min\{(\ell+1)^{3/2},T_N^3\}}\\
&\;\leq\; (const.')\, e^{-d(T_N) V_N} \Bigg(\sum_{\ell=0}^{\infty}
\frac{1}{(l+1)^{3/2}}+\sum_{\ell=0}^{\infty}
\frac{e^{-d(T_N) (\ell)}}{T_N^3}\Bigg) \,.
\end{split}
\end{align}
Since $d(T_n)\sim m_\delta/T_N^3$, with $m_\gd > 0$,
and $V_N\gg T_N^3$ we obtain that the l.h.s. of \eqref{eq:ratic}
tends to $0$ as $N\to \infty$.

Thus, we can write
\begin{equation}\label{eq:trun2}
\begin{split}
    & \bP_{N,\delta}^{T_N} \Bigg(a<
    \frac{Y_{L_{N,T_N}}^{T_N}}{\sqrt{v_\delta k_N}}\leq b \Bigg) \\
    & \quad \;=\; \sum_{\ell = 0}^{V_N} \bP_{N,\delta}^{T_N}
    \Bigg(a<\frac{Y_{L_{N,T_N}}^{T_N}}{\sqrt{v_\delta k_N}}\leq b \,\Bigg|\,
    \mu_N = N-\ell \Bigg)
    \; \bP_{N,\delta}^{T_N}\big( \mu_N = N-\ell \big)
    \; + \; \gep_N(a,b) \,,
\end{split}
\end{equation}
where $\gep_N(a,b)$ tends to $0$ as $N\to \infty$, uniformly
over $a,b \in\R$. At this stage,
by using the Markov property and \eqref{eq:crucial} we may write
\begin{align*}
	& \bP_{N,\delta}^{T_N}
    \Bigg(a<\frac{Y_{L_{N,T_N}}^{T_N}}{\sqrt{v_\delta k_N}}\leq b
    \,\Bigg|\, \mu_N = N-\ell \Bigg)
    \;=\; \bP_{N,\delta}^{T_N}
    \Bigg(a<\frac{Y_{L_{N-\ell,T_N}}^{T_N}}{\sqrt{v_\delta k_N}}\leq b
    \,\Bigg|\, N-\ell \in \tau^T \Bigg) \\
    & \qquad \;=\; \cP_{\delta,T_N}
    \Bigg(a<\frac{Y_{L_{N-\ell}}}{\sqrt{v_\delta k_N}}\leq b \,\Bigg|\,
    N-\ell\in\tau \Bigg) \,.
\end{align*}
Plugging this into \eqref{eq:trun2},
recalling \eqref{step3} and the fact that
$\sum_{\ell = 0}^{V_N} \bP^{T_N}_{N,\gd}(\mu_N = N-\ell) \to 1$ (by \eqref{eq:ratic}),
it follows that equation \eqref{step4} is proven, and the proof is complete.
\qed


\medskip

\section{Proof of Theorem~\ref{th:main}: part (\ref{part:2})}
\label{sec:partii}

We assume that $T_N \sim (const.) N^{1/3}$ and we start proving
the first relation in \eqref{eq:crit}, that we rewrite as follows:
for every $\gep > 0$ we can find $M>0$ such that for large $N$
\begin{equation*}
	\bP_{N,\gd}^{T_N}  \big( |S_N| > M \cdot T_N \big) \; \le \gep\,.
\end{equation*}
Recalling that $L_N^T$ is the number of times the polymer
has touched an interface up to epoch $N$, see \eqref{eq:L},
we have $|S_N| \le T_N \cdot (L_{N,T_N} + 1)$, hence it suffices to show that
\begin{equation} \label{eq:toproveii}
	\bP_{N,\gd}^{T_N}  \big( L_{N,T_N} > M \big) \; \le \gep\,.
\end{equation}
By using \eqref{eq:crucial} we have
\begin{align*}
	\bP_{N,\gd}^{T_N} \big( & L_{N,T} > M \big)
	\;=\; \frac{1}{Z_{N,\gd}^{T_N}} \, E\Big[ e^{H^{T_N}_{N,\gd}(S)}
	\, \ind_{\{L_{N,T_N} > M\}} \Big] \\
	& \;=\; \frac{1}{Z_{N,\gd}^{T_N}} \,
	\sum_{r=0}^N  E\Big[ e^{H^{T_N}_{r,\gd}(S)} \,
	\ind_{\{L_{r,T_N} > M\}} \,
	\ind_{\{r \in \tau^{T_N}\}} \Big] \, P(\tau_1^{T_N} > N-r) \\
	& \;=\; \frac{1}{Z_{N,\gd}^{T_N}} \,
	\sum_{r=0}^N e^{\phi(\gd, T_N)r} \, \cP_{\gd, T_N} \big( L_{r,T_N} > M ,
	\, r \in \tau^{T_N} \big) \, P(\tau_1^{T_N} > N-r) \,.
\end{align*}
By \eqref{eq:boundqbis} and \eqref{eq:gT} it follows easily that
\begin{equation} \label{eq:plainlb}
	Z^{T_N}_{N,\gd} \;\ge\; P(\tau_1^{T_N} > N) \;\ge\;
	\frac{(const.)}{T_N}\, e^{-\frac{\pi^2}{2 T_N^2} N}
\end{equation}
(note that this bound holds true whenever we have $(const.) N^{1/4} \le T_N \le (const.')
\sqrt{N}$ for large $N$).
Using this lower bound on $Z^{T_N}_{N,\gd}$,
together with the upper bound in \eqref{eq:boundqbis}, the asymptotic developments
in \eqref{eq:gplusphi} and \eqref{eq:gT}, we obtain
\begin{align*}
	\bP_{N,\gd}^{T_N} \big( L_{N,T} > M \big)
	\;\le\; (const.) \, T_N \,
	\sum_{r=0}^N \cP_{\gd, T_N} \big( L_{r,T_N} > M ,
	\, r \in \tau^{T_N} \big) \, \frac{1}{\min\{\sqrt{N-r+1}, T_N\}} \,.
\end{align*}
The contribution of the terms with $r > N-T_N^2$
is bounded with the upper bound \eqref{eq:bound_renewal}:
\begin{equation*}
	T_N \sum_{r=N-T_N^2}^N \frac{1}{T_N^3} \, \frac{1}{\sqrt{N-r+1}}
	\;\le\; \frac{(const.)}{T_N} \;\longrightarrow \;0 \qquad
	(N\to\infty) \,,
\end{equation*}
while for the terms with $r \le N-T_N^2$ we get
\begin{equation*}
	T_N \, \sum_{r=0}^N \cP_{\gd, T_N} \big( L_{r,T_N} > M ,
	\, r \in \tau^{T_N} \big) \, \frac{1}{T_N} \;=\;
	\cE_{\gd, T_N} \big( (L_{N,T_N} - M)
	\ind_{\{L_{N,T_N} > M\}} \big) \,.
\end{equation*}
Finally, we simply observe that $\{L_{N,T_N}=k\} \subseteq
\inter_{i=1}^k\{\tau_i-\tau_{i-1} \le N\}$, hence
\begin{equation*}
	\cP_{\gd, T_N}(L_{N,T_N}=k) \;\le\;
	\big( \cP_{\gd, T_N}(\tau_1 \le N) \big)^k \;\le\; c^k\,,
\end{equation*}
with $0 < c < 1$, as it follows from \eqref{eq:boundrenbislb} and
\eqref{eq:gplusphi} recalling that $N = O(T_N^3)$.
Putting together the preceding estimates, we have
\begin{align*}
	\bP_{N,\gd}^{T_N} \big( L_{N,T_N} > M \big) & \;\le\;
	(const.) \, \cE_{\gd, T_N} \big( (L_{N,T_N} - M)
	\ind_{\{L_{N,T_N} > M\}} \big) \\
	& \;=\; 	(const.) \, \sum_{k=M+1}^\infty (k-M)
	\, \cP_{\gd, T_N}(L_{N,T_N} = k) \\
	& \;\le\; (const.) \, \sum_{k=M+1}^\infty (k-M) \, c^k
	\;\le\; (const.') \, c^M\,,
\end{align*}
and \eqref{eq:toproveii} is proven by choosing $M$ sufficiently large.

\smallskip

Finally, we prove at the same time
the second relations in \eqref{eq:crit} and \eqref{eq:supercrit1}, by showing that
for every $\gep > 0$ there exists $\eta > 0$ such that
for large $N$
\begin{equation} \label{eq:secondpart}
	\bP^{T_N}_{N,\gd} \big( |S_N| \le \eta \, T_N \big) \;\le\; \gep \,,
\end{equation}
whenever $T_N$ satisfies $(const.) N^{1/3} \le T_N \le (const.') \sqrt{N}$
for large $N$.
Letting $P_k$ denoting the law of the simple random walk starting at $k \in \N$
and $\tau_1^\infty$ its first return to zero, it follows by Donsker's invariance
principle that there exists $c>0$ such that
$\inf_{0 \le k \le \eta T_N} P_k (\tau_1^\infty \le \eta^2 T_N^2\,,\
S_i < T_N \,\forall i \le \tau_1^\infty) \ge c$
for large $N$. Therefore we may write
\begin{equation*}
\begin{split}
	& c \,\bP^{T_N}_{N,\gd} \big( |S_N| \le \eta \, T_N \big) \;=\;
	\frac{c}{Z^{T_N}_{N,\gd}} \, \sum_{k= 0}^{\eta T_N}
	E \Big[ e^{H^{T_N}_{N,\gd}(S)} \, \ind_{\{|S_N| = k\}} \Big] \\
	& \;\le\;
	\frac{1}{Z^{T_N}_{N,\gd}} \, \sum_{k= 0}^{\eta T_N}
	\, \sum_{u=0}^{\eta^2 T_N^2} \,
	E \Big[ e^{H^{T_N}_{N,\gd}(S)} \, \ind_{\{|S_N| = k\}} \Big]
	\, P_k(\tau_1^\infty = u\,,\ S_i < T_N \,\forall i \le u) \\
	& \;=\; \frac{1}{Z^{T_N}_{N,\gd}} \, \sum_{k= 0}^{\eta T_N}
	\, \sum_{u=0}^{\eta^2 T_N^2} \,
	E \Big[ e^{H^{T_N}_{N+u,\gd}(S)} \, \ind_{\{|S_N| = k\}}
	\, \ind_{\{|S_{N+i}| < T_N \,\forall i \le u\}} \, \ind_{\{S_{N+u}=0\}} \Big]\,.
\end{split}
\end{equation*}
Performing the sum over $k$, dropping the second indicator function and using
equations \eqref{eq:overandover}, \eqref{eq:bound_renewal} and \eqref{eq:phineg},
we obtain the estimate
\begin{equation*}
\begin{split}
	& \bP^{T_N}_{N,\gd} \big( |S_N| \le \eta \, T_N \big) \;\le\;
	\frac{1}{c\, Z^{T_N}_{N,\gd}} \, \sum_{u=0}^{\eta^2 T_N^2} \,
	E \Big[ e^{H^{T_N}_{N+u,\gd}(S)} \, \ind_{\{N+u \in \tau^{T_N}\}} \Big] \\
	& \;\le\; \frac{1}{c\, Z^{T_N}_{N,\gd}} \, \sum_{u=0}^{\eta^2 T_N^2} \,
	e^{\phi(\gd,T_N) (N+u)} \, \cP_{\gd, T_N}(N+u \in \tau) \;\le\;
	(const.) \, \frac{\eta^2 \,T_N^2}{Z^{T_N}_{N,\gd} \, T_N^3} \,
	e^{-\frac{\pi^2}{2 T_N^2} N}\,.
\end{split}
\end{equation*}
Then \eqref{eq:plainlb} shows that equation \eqref{eq:secondpart}
holds true for $\eta$ small, and we are done.\qed


\medskip

\section{Proof of Theorem~\ref{th:main}: part (\ref{part:3})}
\label{sec:partiii}

We now give the proof of part (\ref{part:3}) of Theorem~\ref{th:main}.
More precisely, we prove the first relation in \eqref{eq:supercrit1},
because the second one has been proven at the end of Section~\ref{sec:partii}
(see \eqref{eq:secondpart} and the following lines).
We recall that we are in the regime when $N^{1/3} \ll T_N \le (const.)\sqrt{N}$,
so that in particular
\begin{equation} \label{eq:ass1}
	C \;:=\; \inf_{N\in\N} \, \frac{N}{T_N^2} \;>\; 0 \,.
\end{equation}
We start stating an immediate corollary of Proposition~\ref{th:bound_renewal}.

\begin{corollary} \label{th:corcor}
For every $\gep > 0$ there exist $T_0 > 0$,
$M_\gep \in 2\N$, $d_\gep > 0$ such that for $T > T_0$
\begin{equation*} \label{eq:renconc}
	\sum_{k=M_\gep}^{d_\gep T^3} \cP_{\delta,T} \big( k\in\tau \big)
	\;\le\; \gep \,.	
\end{equation*}
\end{corollary}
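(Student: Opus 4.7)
The plan is to invoke the upper bound from Proposition~\ref{th:bound_renewal} and split the sum at the natural crossover $k \approx T^2$, where the two regimes in $\min\{k^{3/2}, T^3\}$ meet. Concretely, for $T$ large enough that $T^2 \le d_\gep T^3$ (i.e.\ $T \ge 1/d_\gep$), I would write
\begin{equation*}
	\sum_{k=M_\gep}^{d_\gep T^3} \cP_{\delta,T}(k\in\tau)
	\;\le\; c_2 \sum_{k=M_\gep}^{\lfloor T^2\rfloor} \frac{1}{k^{3/2}}
	\,+\, c_2 \sum_{k=\lfloor T^2 \rfloor +1}^{d_\gep T^3} \frac{1}{T^3} \,.
\end{equation*}
(Restricting to even $k$ only decreases the sum, so I ignore this.)

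For the first piece, comparison with the tail of a convergent series gives $\sum_{k \ge M_\gep} k^{-3/2} \le C/\sqrt{M_\gep}$, which can be made smaller than $\gep/(2c_2)$ by choosing $M_\gep$ large, \emph{independently of $T$}. For the second piece, the number of terms is at most $d_\gep T^3$, so the contribution is bounded by $c_2 \, d_\gep$, which is smaller than $\gep/2$ provided $d_\gep \le \gep/(2c_2)$. Putting the two estimates together yields the desired bound, and fixing $T_0 \ge 1/d_\gep$ (and also $T_0$ large enough for Proposition~\ref{th:bound_renewal} to apply) completes the argument.

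There is essentially no obstacle here: the proof is a routine two-term splitting exercise, which is why the statement is labelled as an immediate corollary. The only thing to watch is the order in which the constants are chosen ($\gep$ first, then $d_\gep$, then $M_\gep$, then $T_0$), so that the estimates on the two pieces of the sum really are independent and combine to give a total bounded by $\gep$.
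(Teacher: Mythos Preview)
Your argument is correct and is precisely the routine computation the paper has in mind; the paper gives no proof at all, merely calling the statement ``an immediate corollary of Proposition~\ref{th:bound_renewal}'', and your two-term splitting at $k \approx T^2$ is the natural way to unpack that. The order of choosing constants is handled correctly.
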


Note that we can restate the first relation in \eqref{eq:supercrit1}
as $\bP^{T_N}_{N,\gd} \big( \tau^{T_N}_{L_{N,T_N}} \le L \big) \ge 1 - \gep$.
Let us define three intermediate quantities, by setting
for $l\in\mathbb{N}$
\begin{align}
	\label{eq:B1}
	B_1(l,N) & \;=\; \bP_{N,\delta}^{T_N}\big(\tau^{T_N}_{L_{N,T_N}}\leq l\big)\,
	Z_{N,\delta}^{T_N} \,,\\
	B_2(l,N) & \;=\; \bP_{N,\delta}^{T_N} \big(l<\tau^{T_N}_{L_{N,T_N}}\leq N-\eta T_N^2\big)
	\, Z_{N,\delta}^{T_N} \,,\\
	B_3(N) & \;=\; \bP_{N,\delta}^{T_N} \big(\tau^{T_N}_{L_{N,T_N}}> N-\eta T_N^2\big)
	\, Z_{N,\delta}^{T_N} \,,
\end{align}
where we fix $\eta := C/2$, so that $\eta T_N^2 \le N/2$.
The first relation in \eqref{eq:supercrit1} will be proven
once we show that for all $\gep>0$, there exists
$l_\gep \in \N$ such that for large $N$ we have
\begin{align}\label{eq:2cond}
	\frac{B_2(l_\gep,N)}{B_1(l_\gep,N)} \;\le\; \gep
	\qquad \text{and} \qquad
	\frac{B_3(N)}{B_1(l_\gep,N)} \;\le\; \gep \,.
\end{align}

We start giving a simple lower bound of $B_1$: since $\{\tau^{T_N}_{L_{N,T_N}}\leq l\}
\supseteq \{\tau_1^{T_N} > N\}$, we have
\begin{align}\label{eq:lbB1}
	B_1(l,N) \;\ge\; E \Big[ e^{H^{T_N}_{N,\gd}(S)} \,
	\ind_{\{\tau_1^{T_N} > N\}} \Big]
	\;=\; P \big( \tau_1^{T_N} > N \big)
	\;\ge\; \frac{(const.)}{T_N} \,e^{-\frac{\pi^2}{2T_N^2} N} \,,
\end{align}
having applied the lower bound in \eqref{eq:boundqbis}.
Next we consider $B_2$.
Summing over the possible values of $\tau_{L_{N,T_N}}^{T_N}$ and using
\eqref{eq:overandover}, we have
\begin{equation}\label{eq:recob}
\begin{split}
	B_2(l,N) & \;=\; \sum_{n=l+1}^{N-\eta T_N^2}
	E \Big[ e^{H^{T_N}_{n,\gd}(S)} \, \ind_{\{n \in \tau^{T_N}\}} \Big]
	\cdot P\big( \tau_1^{T_N} > N - n \big)\\
	& \;=\; \sum_{n=l+1}^{N-\eta T_N^2}
	\mathcal{P}_{\delta, T_N}(n\in\tau) \; e^{\phi(\delta,T_N) n} \;
	P\big(\tau_1^{T_N}>N-n\big) \\
	& \;\le\; \frac{(const.)}{T_N} \, e^{-\frac{\pi^2}{2T_N^2} N} \,
	\left( \sum_{n=l+1}^{N} \mathcal{P}_{\delta, T_N}(n\in\tau) \right) \,,
\end{split}
\end{equation}
where we have applied the upper bound in \eqref{eq:boundqbis} and the equalities \eqref{eq:phineg} and \eqref{eq:gT}
(we also assume that $\eta T_N^2 \in \N$ for simplicity).
Since $N \ll T_N^3$, by Corollary~\ref{th:corcor}
we can fix $l = l_\gep$ depending only on $\gep$
such that $B_2/B_1 \le \gep$ (recall \eqref{eq:lbB1}).
Finally we analyze $B_3(N)$: in analogy with \eqref{eq:recob} we write
\begin{align*}
	B_3(N) & \;\le\; \sum_{n=N - \eta T_N^2 + 1}^{N}
	\mathcal{P}_{\delta, T_N}(n\in\tau) \; e^{\phi(\delta,T_N) n} \;
	P\big(\tau_1^{T_N}>N-n\big) \\
	& \;\le\; e^{-\frac{\pi^2}{2T_N^2} N} \,
	\frac{(const.)}{T_N^3} \, \sum_{n=N - \eta T_N^2 + 1}^{N}
	\frac{(const.')}{\sqrt{N-n+1}} \;\le\;
	(const.'') \, e^{-\frac{\pi^2}{2T_N^2} N} \, \frac{1}{T_N^2} \,,
\end{align*}
where we have applied the upper bounds in \eqref{eq:boundqbis} and
\eqref{eq:bound_renewal} (note that $n \ge (C/2) \, T_N^2$).
Therefore $B_3/B_1 \le \gep$ for $N$ large, and
the first relation in \eqref{eq:supercrit1} is proven.

\medskip

\section{Proof of Theorem~\ref{th:main}: part (\ref{part:4})}
\label{sec:partiv}

We now assume that $T_N \gg \sqrt{N}$, that is
\begin{equation} \label{eq:ass1bis}
	\lim_{N\to\infty} \, \frac{N}{T_N^2} \;=\; 0 \,.
\end{equation}
The proof is analogous to the proof of part~(\ref{part:3}),
given in Section~\ref{sec:partiii}. We set
for $l \in \N$
\begin{align}
	\label{eq:B1new}
	B_1(l,N) & \;=\; \bP_{N,\delta}^{T_N}\big(\tau^{T_N}_{L_{N,T_N}} < l\big)\,
	Z_{N,\delta}^{T_N} \,,\\
	B_2(l,N) & \;=\; \bP_{N,\delta}^{T_N} \big(l\le \tau^{T_N}_{L_{N,T_N}}
	\leq N/2 \big)
	\, Z_{N,\delta}^{T_N} \,,\\
	B_3(N) & \;=\; \bP_{N,\delta}^{T_N} \big( \tau^{T_N}_{L_{N,T_N}}> N/2 \big)
	\, Z_{N,\delta}^{T_N} \,,
\end{align}
and we first show that for every $\gep > 0$ we can choose $l_\gep \in \N$
such that for large $N$
\begin{align} \label{eq:priimo}
	\frac{B_2(l_\gep,N)}{B_1(l_\gep,N)} \;\le\; \gep
	\qquad \text{and} \qquad
	\frac{B_3(N)}{B_1(l_\gep,N)} \;\le\; \gep \,.
\end{align}
We start with a lower bound: since $\{\tau^{T_N}_{L_{N,T_N}}\leq l\}
\supseteq \{\tau_1^{T_N} > N\}$, by \eqref{eq:boundqbis} we have
\begin{align} \label{eq:lbB1new}
	B_1(l,N) \;\ge\; E \Big[ e^{H^{T_N}_{N,\gd}(S)} \,
	\ind_{\{\tau_1^{T_N} > N\}} \Big]
	\;=\; P \big( \tau_1^{T_N} > N \big)
	\;\ge\; \frac{(const.)}{\sqrt{N}} \,.
\end{align}
Next consider $B_2$.
Summing over the possible values of $\tau_{L_{N,T_N}}^{T_N}$ and using
\eqref{eq:overandover}, we have
\begin{equation}
\begin{split}
	B_2(l,N) & \;=\; \sum_{k=l}^{N/2}
	E \Big[ e^{H^{T_N}_{k,\gd}(S)} \, \ind_{\{k \in \tau^{T_N}\}} \Big]
	\cdot P\big( \tau_1^{T_N} > N - k \big)\\
	& \;=\; \sum_{k=l}^{N/2}
	\mathcal{P}_{\delta, T_N}(k\in\tau) \; e^{\phi(\delta,T_N) k} \;
	P\big(\tau_1^{T_N}>N-k\big)
\end{split}
\end{equation}
(we assume that $N/2 \in \N$ for notational convenience).
By the upper bound in \eqref{eq:boundqbis} we have
$P\big(\tau_1^{T_N}>N-k\big) \le (const.')/\sqrt{N-k}$.
Since $\phi(\gd, T_N) \le 0$, we obtain
\begin{equation*}
	B_2(l,N) \;\le\; \frac{(const.)}{\sqrt N} \, \left(
	\sum_{k=l}^{N} \mathcal{P}_{\delta, T_N}(k\in\tau) \right) \,,
\end{equation*}
which can be made arbitrarily small by fixing $l = l_\gep$,
thanks to Corollary~\ref{th:corcor}, hence we have proven
that $B_2/B_1 \le \gep$ for large $N$.
In a similar fashion, for $B_3$ we can write
\begin{align*}
	& B_3 \;=\; \sum_{n= N/2 + 1}^N \cP_{\gd, T_N}(n \in \tau)
	\,  e^{\phi(\gd, T_N) n} \, P\big( \tau_1^{T_N} > N-n \big) \\
	& \ \;\le\; (const.) \,
	\sum_{n= N/2 + 1}^N \frac{1}{n^{3/2}} \, \frac{1}{\sqrt{N-n+1}}
	\;\le\; \frac{(const.)}{(N/2)^{3/2}} \,
	\sum_{n= N/2 + 1}^N \frac{1}{\sqrt{N-n+1}} \;\le\; \frac{(const.')}{N} \,,
\end{align*}
where we have used the upper bounds in \eqref{eq:bound_renewal} and
\eqref{eq:boundqbis} as well as
the fact that $\phi(\gd, T_N) n = o(1)$ uniformly in $n \le N$,
by \eqref{eq:phineg}. Therefore for large~$N$ we have $B_3 / B_1 \le \gep$
and equation \eqref{eq:priimo} is proven. This implies that, for every $\gep > 0$,
there exists $l_\gep \in \N$ such that for large $N$
\begin{equation} \label{eq:ingredient}
	\bP^{T_N}_{N,\gd}
	\big( \tau^{T_N}_{L_{N,T_N}} < l_\gep \big) \ge 1-\gep\,.
\end{equation}

Next we turn to the proof of the both relations in \eqref{eq:supercrit2}
at the same time.
In view of \eqref{eq:ingredient}, it suffices to show that, for every $\gep > 0$,
we can choose $M \in \N$ and $\eta > 0$ such that for large $N$
\begin{equation} \label{eq:qwerty}
	\bP^{T_N}_{N,\gd} \bigg( \Big\{ \tau^{T_N}_{L_{N,T_N}} < l_\gep \Big\} \cap
	\bigg( \bigg\{ \sup_{n \le N} |S_n| > M \sqrt{N} \bigg\} \cup
	\Big\{ |S_N| \le \eta \sqrt{N} \Big\} \bigg) \bigg)
	\;\le\; \gep \,.
\end{equation}
Summing over the values of $\tau^{T_N}_{L_{N,T_N}}$ and using \eqref{eq:overandover},
the l.h.s. of \eqref{eq:qwerty} is bounded from above by
\begin{equation*}
	\sum_{u=0}^{l_\gep-1} \cP_{\gd, T_N} ( u \in \tau ) \, e^{\phi(\gd, T_N) u}
	\, A_{N,u}(M,\eta) \,,
\end{equation*}
where
\begin{equation*}
	A_{N,u}(M,\eta) \,:=\,
	\frac{P \big( \big\{\tau_1^{T_N} > N-u \big\} \cap
	\big( \big\{ \sup_{n \le N-u} |S_n| > M \sqrt{N} \big\} \cup
	\big\{ |S_{N-u}| \le \eta \sqrt{N} \big\} \big) \big)}
	{Z^{T_N}_{N,\gd}} \,.
\end{equation*}
Therefore equation \eqref{eq:qwerty} will be proven once we show that
we can chose $M, \eta$ such that
$A_{N,u}(M,\eta) \le \gep/l_\gep$, for $N$ large.
For the partition function appearing in the denominator,
applying \eqref{eq:B1new} and \eqref{eq:lbB1new} we easily obtain
$Z^{T_N}_{N,\gd} \ge (const.) / \sqrt{N}$.
Setting $N_u := N-u$ for short,
the numerator in the definition of $A_{N,u}(M,\eta)$ can be bounded from above by
\begin{equation*}
	P\big( |S_i| > 0\,,\, \forall i \le N_u \big) \cdot
	P\bigg( \bigg\{ \sup_{n \le N_u} |S_n| > M \sqrt{N} \bigg\} \cup
	\Big\{ |S_{N_u}| \le \eta \sqrt{N} \Big\}
	\,\bigg|\, |S_i| > 0\,,\, \forall i \le N_u \bigg)\,.
\end{equation*}
It is well-known \cite{cf:Fel1}
that $P\big( |S_i| > 0\,,\, \forall i \le n \big) \le (const.) / \sqrt n$.
Recalling the weak convergence of the random walk conditioned to stay
positive toward the Brownian meander \cite{cf:Bol}, we conclude that for
every fixed $u \le l_\gep$ and for large $N$ we have the bound
\begin{equation}
	A_{N,u}(M,\eta) \;\le\; (const.) \, P\bigg( \bigg\{ \sup_{0 \le t \le 1}
	m_t > M \bigg\} \cup \big\{ m_1 \le \eta \big\} \bigg) \,.
\end{equation}
We can then choose $M$ large and $\eta$ small so as to satisfy the desired bound
$A_{N,u}(M,\eta) \le \gep/l_\gep$, and the proof is completed.\qed


\bigskip

\appendix

\section{On the free energy}


\smallskip
\subsection{Free energy estimates}

\label{sec:fe_estimates}

We determine the asymptotic behavior of $\phi(\gd,T)$
as $T \to \infty$, for fixed $\gd < 0$.
By Theorem~1 in~\cite{cf:CP},
we have $Q_T\big( \phi(\gd,T) \big) = e^{-\gd}$, and furthermore
\begin{equation*}
	Q_T(\gl) \,=\, 1 + \sqrt{e^{-2\gl} - 1} \cdot
    \frac{1 - \cos\big( T \arctan \sqrt{e^{-2\gl} - 1} \big)}{
    \sin \big( T \arctan \sqrt{e^{-2\gl} - 1} \big)} \,,
\end{equation*}
see, e.g., equation (A.5) in \cite{cf:CP}. If we set
\begin{equation} \label{eq:gamma}
    \gamma = \gamma(\gd,T) := \arctan \sqrt{e^{-2\phi(\gd,T)} - 1}\,,
\end{equation}
we can then write
\begin{equation} \label{eq:Qtilde}
    \tilde Q_T \big( \gamma(\gd,T) \big) \;=\; e^{-\gd} \qquad
    \text{where} \qquad
    \tilde Q_T (\gamma) \;=\; 1 + \tan \gamma \cdot
    \frac{1-\cos(T\gamma)}{\sin(T\gamma)}\,.
\end{equation}
Note that $\gamma \mapsto \tilde Q_T (\gamma)$ is an increasing function
with $\tilde Q_T (0) = 1$ and $\tilde Q_T (\gamma) \to + \infty$
as $\gamma \uparrow \frac \pi T$, hence $0 < \gamma(\gd,T)  < \frac\pi T$.
So we have to study the equation $\tilde Q_T (\gamma) = e^{-\gd}$
for $0 < \gamma < \frac \pi T$. An asymptotic development yields
\begin{equation*}
    (1+o(1)) \, \gamma \cdot \frac{1-\cos(T\gamma)}{\sin(T\gamma)}
    \;=\; e^{-\gd}-1\,,
\end{equation*}
where here and in the sequel $o(1)$ is to be understood as $T \to\infty$
with $\gd < 0$ fixed. Setting $x = T \gamma$ gives
\begin{equation*}
    (1+o(1)) \, x \cdot \frac{1-\cos x}{\sin x}     \;=\; T(e^{-\gd}-1)\,,
\end{equation*}
where $0 < x < \pi$. Since the r.h.s. diverges as $T \to\infty$,
$x$ must tend to $\pi$ and a further development yields
\begin{equation*}
    (1+o(1)) \, \frac{2\pi}{\pi - x} = T(e^{-\gd}-1)\,,
\end{equation*}
from which we get $x = \pi - \frac{2\pi}{e^{-\gd}-1} \frac 1T (1+o(1))$
and hence, since $\gamma(\gd,T) = \frac x T$,
\begin{equation} \label{eq:asgamma}
    \gamma(\gd,T) \;=\; \frac{\pi}{T}
    - \frac{2\pi}{e^{-\gd}-1} \, \frac{1}{T^2} (1+o(1))\,.
\end{equation}
Recalling \eqref{eq:gamma}, we have
\begin{equation*}
    \sqrt{e^{-2\phi(\gd,T)} - 1} \;=\;
    \tan \bigg(\frac{\pi}{T}
    - \frac{2\pi}{e^{-\gd}-1} \, \frac{1}{T^2} (1+o(1)) \bigg)\,.
\end{equation*}
Since the function $\gl \mapsto \arctan \sqrt{e^{-2\gl} - 1}$
is decreasing and continuously differentiable, with non-vanishing first derivative, it
follows that
\begin{equation} \label{eq:phineg2}
    \phi(\gd,T) \;=\; - \frac{\pi^2}{2T^2} \bigg( 1 -
    \frac{4}{e^{-\gd} - 1}\,\frac 1T + o\bigg( \frac 1T \bigg) \bigg)\,,
\end{equation}
so that equation \eqref{eq:phineg} is proven.\qed


\smallskip

\subsection{Further estimates}

\label{sec:further_estimates}

We now derive some asymptotic properties of the variables
$(\tau_1, \gep_1)$ under $\cP_{\gd, T}$, as $T\to\infty$
and for fixed $\gd < 0$.

We first focus on $Q^1_T(\phi(\gd,T))$, where
$Q^1_T(\gl) := E(e^{-\gl \tau_1^T} \, \ind_{\{\gep_1^T = 1\}})
= \sum_{n\in\N} e^{-\gl n} q_T^1(n)$. In analogy with the computations above,
by equation (A.5) in~\cite{cf:CP} we can write
\begin{equation*}
	Q^1_T(\phi(\gd,T)) \;=\; \tilde Q^1_T(\gamma(\gd,T))\,, \qquad \quad
	\text{where } \quad \
	\tilde Q^1_T(\gamma) \;:=\; \frac{\tan \gamma}{2\, \sin(T\gamma)}\,,
\end{equation*}
so that from \eqref{eq:asgamma} we obtain as $T \to \infty$
\begin{equation} \label{eq:asQ1}
	Q^1_T(\phi(\gd,T)) \;=\; \frac{\pi}{T} \, \frac{1}
	{2 \cdot \frac{2\pi}{e^{-\gd}-1} \, \frac 1T} \, (1+o(1)) \;\longrightarrow\;
	\frac{e^{-\gd}-1}{4} \,.
\end{equation}
In particular, by \eqref{eq:defPdeltaT} we can write as $T \to \infty$
\begin{equation} \label{eq:asQ1bis}
	\cE_{\gd, T}(\gep_1^2) \;=\; 2 \,\cP_{\gd, T} (\gep_1 = +1) \;=\;
	2 \, e^\gd \, Q^1_T(\phi(\gd,T)) \;\longrightarrow\;
	\frac{1-e^\gd}{2} \,.
\end{equation}

\smallskip

Next we determine the asymptotic behavior of $\cE_{\gd, T}(\tau_1)$
as $T \to\infty$ for fixed $\gd < 0$. Recalling \eqref{eq:taudelta}
we can write
\begin{align} \label{eq:mmean}
    \cE_{\gd, T}(\tau_1) \;&=\; e^\gd\,\sum_{n\in\N} n\, q_T(n) \,
    e^{-\phi(\gd,T) n} \;=\; - e^\gd\cdot Q_T'(\phi(\gd,T))\,,\\
    \cE_{\gd, T}(\tau_1^2) \;&=\; e^\gd\,\sum_{n\in\N} n^2\, q_T(n) \,
    e^{-\phi(\gd,T) n} \;=\;  e^\gd\cdot Q_T''(\phi(\gd,T))\,,
\end{align}
hence the problem is to determine $Q_T'(\gl)$ for $\gl = \phi(\gd,T)$.
Introducing the function $\gamma(\gl) := \arctan \sqrt{e^{-2\gl} - 1}$
and recalling \eqref{eq:Qtilde}, since $Q_T = \tilde Q_T \circ \gamma$ it follows that
\begin{align}\label{eq:dersec}
    Q_T'(\gl) \;&=\; \tilde Q'_T(\gamma(\gl)) \cdot \gamma'(\gl)\,,\\
     Q_T''(\gl) \;&=\; \gamma''(\gl)\cdot \tilde Q'_T(\gamma(\gl))+ (\gamma'(\gl))^2\cdot \tilde Q''_T(\gamma(\gl))\,.
\end{align}
By direct computation
\begin{align}
    \tilde Q'_T(\gamma) \;&=\; \frac{1\cos(T\gamma)}{\sin(T\gamma)}\,\cdot\,
    \bigg(\frac{1}{\cos^2\gamma}\;+\;\frac{T \tan \gamma}{\sin(T\gamma)}\bigg)\;,\\
    \tilde Q''_T(\gamma) \;&=\; \frac{1-\cos(T \gamma)}{\sin(T \gamma)}
    \,\cdot\,
    \bigg(\frac{2 T}{\sin(T \gamma)\, \cos^2 x}\,+\,\frac{2 \sin \gamma}{\cos^3 x}+
    \frac{T^2 \tan \gamma}{\sin^2(T \gamma)}\, (1 - \cos(T\gamma))\bigg)\,,
\end{align}
and
\begin{equation*}
    \gamma'(\gl) \;=\; - \frac{1}{\sqrt{e^{-2\gl} - 1}}\;,\quad \quad \quad \gamma''(\gl) \;=\; - \frac{e^{-2 \gl}}{(e^{-2\gl} - 1)^{3/2}}\;.
\end{equation*}
Recalling \eqref{eq:mmean} and \eqref{eq:gamma}, we have
\begin{equation*}
    \cE_{\gd, T}(\tau_1) \;=\; -e^\gd \cdot
    \tilde Q'_T(\gamma(\gd,T)) \cdot \gamma'(\phi(\gd,T))\,.
\end{equation*}
Now the asymptotic behaviors \eqref{eq:asgamma} and \eqref{eq:phineg2} give
\begin{equation*}
    \tilde Q'_T(\gamma(\gd,T)) \;=\; \frac{e^{-\gd}-1}{\pi} \, T
    \;+\; \frac{(e^{-\gd}-1)^2}{2\pi} \, T^2
    \;+\; o(T)\,, \qquad \
    \gamma'(\phi(\gd,T)) \;=\; - \frac T\pi \;+\; o(T)\,,
\end{equation*}
and
\begin{equation*}
    \tilde Q''_T(\gamma(\gd,T)) \;=\; \frac{(e^{-\gd}-1)^3}{2 \pi^2} \, T^4
    \;+\; o(T^4)\,, \qquad \
    \gamma''(\phi(\gd,T)) \;=\; - \frac {T^3}{\pi^3} \;+\; o(T^3)\,.
\end{equation*}
Combining the preceding relations, we obtain
\begin{align*}
    \cE_{\gd, T}(\tau_1) \;&=\; \frac{e^\gd (e^{-\gd}-1)^2}{2 \pi^2} \, T^3
    \;+\; \frac{1-e^\gd}{\pi^2}\, T^2 \;+\; o(T^2)\,,\\
    \cE_{\gd, T}(\tau_1^2) \;&=\; \frac{e^\gd (e^{-\gd}-1)^3}{2 \pi^4} \, T^6
    \;+\; o(T^6)\,,
\end{align*}
which show that equations \eqref{eq:asET} and \eqref{eq:asET2} hold true.\qed

\medskip

\section{Renewal theory estimates}

This section collects the proofs of Lemma~\ref{th:ineg2} and
Proposition~\ref{th:bound_renewal}.


\smallskip

\subsection{Proof of Lemma~\ref{th:ineg2}}
\label{sec:lemmaineg2}

We recall that, by equation (5.8) in Chapter XIV of~\cite{cf:Fel1}, we have
the following explicit formula for $q_T^j(n)$ (defined in \eqref{eq:defQ}):
\begin{align}\label{eq:equat}
\begin{split}
q^0_T(n) & \;=\; \Bigg(
\frac{2}{T} \sum_{\nu=1}^{\lfloor (T-1)/2\rfloor} \cos^{n-2} \bigg(\frac{\pi \nu}{T}\bigg)
\sin^2 \bigg(\frac{\pi \nu}{T}\bigg) \Bigg) \cdot \ind_{\{n \text{ is even}\}} \,, \\
q^1_T(n) & \;=\; \Bigg(
\frac{1}{T} \sum_{\nu=1}^{\lfloor (T-1)/2\rfloor}
(-1)^{\nu+1} \cos^{n-2} \bigg(\frac{\pi \nu}{T}\bigg)
\sin^2 \bigg(\frac{\pi \nu}{T}\bigg) \Bigg) \cdot \ind_{\{n-T \text{ is even}\}} \,,
\end{split}
\end{align}
hence $q_T(n) = P(\tau_1^T = n) = q_T^0(n) + 2 q_T^1(n)$
is given for $n$ and $T$ even by
\begin{align}\label{eq:q}
q_T(n) \;=\;
\frac{4}{T} \sum_{\nu=1}^{\lfloor (T+2)/4\rfloor}
\cos^{n-2} \bigg(\frac{(2\nu - 1) \pi}{T}\bigg)
\sin^2 \bigg(\frac{(2\nu - 1) \pi}{T}\bigg) \,,
\end{align}
(notice that $\lfloor (T-1)/2 \rfloor = T/2 - 1$ for $T$ even).

We split \eqref{eq:q} in the following way:
we fix $\gep>0$ and we write
\begin{equation} \label{eq:zeroth}
P(\tau_1^T = n) \;=\; V_0(n) \;+\; V_1(n) \;+\; V_2(n)\,,
\end{equation}
where we set
\begin{gather*}
	V_0(n) \;:=\; \frac{4}{T} \cos^{n-2}\left( \frac{\pi}{T} \right)
	\, \sin^2 \left( \frac{\pi}{T} \right) \,, \\
	V_1(n) \;:=\; \frac{4}{T} \,
	\sum_{\nu=2}^{\lfloor \gep T \rfloor}
	\cos^{n-2} \left( \frac{(2\nu - 1) \pi}{T}\right)
	\sin^2\left(\frac{(2\nu - 1) \pi}{T}\right) \,, \\
	V_2(n) \;:=\; \frac{4}{T} \,
	\sum_{\nu= \lfloor \gep T \rfloor + 1 }^{\lfloor (T+2)/4 \rfloor}
	\cos^{n-2} \left(\frac{(2\nu - 1) \pi}{T}\right)
	\sin^2 \left( \frac{(2\nu - 1) \pi}{T} \right) \,.
\end{gather*}
Plainly, as $T \to \infty$ we have
\begin{equation} \label{eq:first}
	V_0(n) \;=\;
	\frac{4 \pi^2}{T^3}\, \left( 1 + o(1) \right) \,
	e^{-g(T) n} \,,
\end{equation}
where $o(\cdot)$ refer as $T \to \infty$, {\sl uniformly
in $n$}. Next we focus on $V_1$:
for $\gep$ small enough and $x\in[0,\pi \gep]$ we have
$\log(\cos(x))\leq -\tfrac{x^2}{3}$, and since $\sin(x)\leq x$ we have
\begin{equation}\label{eq:second}
\begin{split}
	V_1(n) & \;\le\; \frac{4 \pi^2}{T}
	\, \sum_{\nu=2}^{\lfloor \gep T \rfloor}\, \left(\frac{2\nu-1}{T}\right)^2 \,
	e^{-\frac{(n-2) \pi^2}{3} (\frac{2\nu-1}{T})^2} \;\le\;
	(const.) \, \int_{2/T}^{\infty} x^2 \,
	e^{-\frac{\pi^2}{3} n x^2} \, \dd x \\
	& \;=\; \frac{(const.)}{n^{3/2}}
	\int_{2 \sqrt{n}/T}^\infty y^2 \, e^{-\frac{\pi^2}{3} y^2}
	\, \dd y \;\le\; \frac{(const.')}{n^{3/2}}
	e^{-\tfrac{\pi^2 n}{T^2}} \;\le\; \frac{(const.')}{n^{3/2}}
	e^{-g(T) n} \,,
\end{split}
\end{equation}
where the last inequality holds for $T$ large by \eqref{eq:gT}.
The upper bound on $V_2$ is very rough: since $\sin(x) \le x$
and $\cos(x) \le \cos(\pi \gep)$ for $x \in [\pi \gep, \pi/2]$,
we can write
\begin{equation} \label{eq:third}
	V_2(n) \;\le\; \frac{16\pi^2}{T^3} \, \cos^{n-2}(\pi \gep) \,
	\sum_{\nu = \lfloor \gep T \rfloor + 1}^{\lfloor (T+2)/4 \rfloor} \nu^2
	\;\le\; (const.)\, \cos^{n}(\pi\gep) \,.
\end{equation}
Finally, we get a lower bound on $V_1 + V_2$, but only when
$400 \le n \le T^2$.
Since $\log(\cos(x)) \ge -\frac{2}{3} x^2$ and $\sin(x)\ge \frac{x}{2}$
for $x \in [0, \pi/4]$, we can write
\begin{equation}\label{eq:fourth}
\begin{split}
	V_1(n) + & V_2(n) \;\ge\; \frac{\pi^2}{T}
	\, \sum_{\nu=2}^{\lfloor T/8 \rfloor}\, \left(\frac{2\nu-1}{T}\right)^2 \,
	e^{-\frac{2 n \pi^2}{3} (\frac{2\nu-1}{T})^2} \;\ge\; \frac{\pi^2}{2} \,
	\int_{4/T}^{1/4} x^2 \,
	e^{-\frac{2 \pi^2}{3} n x^2} \, \dd x \\
	& \ \ \;=\; \frac{\pi^2}{2n^{3/2}}
	\int_{4 \sqrt{n}/T}^{\sqrt{n}/4} y^2 \, e^{-\frac{\pi^2}{3} y^2}
	\, \dd y \;\ge\; \frac{\pi^2}{2n^{3/2}}
	\int_{4}^{5} y^2 \, e^{-\frac{\pi^2}{3} y^2}
	\, \dd y \;=\; \frac{(const.)}{n^{3/2}} \,.
\end{split}
\end{equation}
Putting together
\eqref{eq:first}, \eqref{eq:second} and \eqref{eq:third},
it is easy to see that the upper bound in \eqref{eq:boundq} holds true
(consider separately the cases $n \le T^2$ and $n > T^2$),
while the lower bound follows analogously from
\eqref{eq:first} and \eqref{eq:fourth}.
To see that also equation \eqref{eq:boundqbis} holds it is
sufficient to sum the bounds in \eqref{eq:boundq} over $n$,
and the proof is completed.\qed


\smallskip

\subsection{Proof of Proposition~\ref{th:bound_renewal}}

\label{sec:bound_renewal}

For convenience, we split the proof in two parts, distinguishing between
the two regimes $n \le T^3$ and $n \ge T^3$.

\smallskip

\subsubsection{The regime $n \le T^3$}

The lower bound in \eqref{eq:bound_renewal} for $n\le T^3$ follows easily from
$\cP_{\gd, T}(n \in \tau) \ge \cP_{\gd, T}(\tau_1 = n)$
together with the lower bound in \eqref{eq:boundren}.
The upper bound requires more work.
We set for $k,n \in \N$
\begin{equation*}
	K_k(n) \;=\; K^T_k(n) \; := \; \cP_{\gd, T}(\tau_k = n) \,,
\end{equation*}
and we note that, by \eqref{eq:boundrenbislb}
and \eqref{eq:gplusphi},
there exists $T_0 > 0$ and $\ga < 1$
such that $\sum_{n=1}^{T^3} K_1(n)  \; \le \; \ga$,
for every $T > T_0$. Since $K_{k+1}(n) = \sum_{m=1}^{n-1}
K_k(m) K_1(n-m)$, an easy induction argument yields
\begin{equation} \label{eq:totalsum}
	\sum_{n=1}^{T^3} K_k(n)  \; \le \; \ga^k \,, \qquad
	\forall k \in \N\,.
\end{equation}
Next we turn to a pointwise upper bound on $K_k(n)$.
From the upper bound in \eqref{eq:boundren},
we know that there exists $C > 0$ such that
$K_1(n) \le C/\min\{n^{3/2}, T^3\}$
for every $n \le T^3$. We now claim that
\begin{equation} \label{eq:induction}
	K_k(n) \;\le\; k^3 \, \ga^{k-1} \,
	\frac{C}{\min\{n^{3/2}, T^3\}} \,, \qquad
	\forall k\in \N \,, \ \forall n \le T^3 \,.
\end{equation}
We argue by induction: we have just observed that this formula holds true
for $k=1$. Assuming now that the formula holds for $k = 1, \ldots, 2m-1$,
we can write for $n \le  T^3$
\begin{equation*}
	K_{2m}(n) \;\le\; 2 \sum_{i=1}^{\lceil n/2 \rceil}
	K_m(i) \, K_m(n-i) \;\le\; 2 \sum_{i=1}^{\lceil n/2 \rceil}
	K_m(i) \left( m^3 \, \ga^{m-1} \,
	\frac{C}{\min\{(n-i)^{3/2}, T^3\}} \right) \,,
\end{equation*}
and since
$\min\{(n-i)^{3/2}, T^3\} \ge \min\{(n/2)^{3/2}, T^3\}
\ge 2^{-3/2} \min\{n^{3/2}, T^3 \}$
for $i$ in the range of summation, from \eqref{eq:totalsum} we get
\begin{equation*}
\begin{split}
	K_{2m}(n) \;\le\; 2^{5/2} \, m^3 \, \ga^{m-1} \,
	\frac{C}{\min\{n^{3/2}, T^3 \}} \sum_{i=1}^{\lceil n/2 \rceil}
	K_m(i) \;\le\;  (2m)^3 \, \ga^{2m-1} \,
	\frac{C}{\min\{n^{3/2}, T^3 \}} \,,
\end{split}
\end{equation*}
so that \eqref{eq:induction} is proven
(we have only checked it when $k=2m$, but the
case $k=2m+1$ is completely analogous).
For $n \le T^3$ we can then write
\begin{equation*}
	\cP_{\gd, T}(n\in\tau) \;=\; \sum_{k=0}^\infty K_k(n) \;\le\;
	\frac{C}{\min\{n^{3/2}, T^3 \}} \, \sum_{k=0}^{\infty} k^3 \, \ga^{k-1}
	\;=\; \frac{(const.)}{\min\{n^{3/2}, T^3 \}}\,,
\end{equation*}
hence the upper bound in \eqref{eq:bound_renewal} is proven.
\qed


\smallskip

\subsubsection{The regime $n \ge T^3$}

We start proving the lower bound in \eqref{eq:bound_renewal}
for $n\ge T^3$.
Setting $\gamma_m := \inf\{ k\ge m:\, k \in \tau\}$
we can write
\begin{align*}
	& \cP_{\gd, T}(n \in \tau) \;\ge\;
	\cP_{\gd, T} \big( \tau \cap [n-T^3, n-1] \ne \emptyset,\, n \in \tau \big) \\
	& \;=\; \sum_{k=n - T^3}^{n-1} \cP_{\gd, T} \big( \mu_{n-T^3} = k \big)
	\, \cP_{\gd, T} \big(n-k \in \tau \big) \;\ge\;
	\frac{(const.)}{T^3} \, \cP_{\gd, T}
	\big( \tau \cap [n-T^3, n-1] \ne \emptyset \big) \,,
\end{align*}
where we have applied the lower bound in \eqref{eq:bound_renewal}
to $\cP_{\gd, T} \big(n-k \in \tau \big)$, because $n-k \le T^3$.
It then suffices to show that there exist $c,T_0 > 0$ such that
for $T > T_0$ and $n \ge T^3$
\begin{equation*}
	\cP_{\gd, T} \big( \tau \cap [n-T^3, n-1] \ne \emptyset \big) \;>\; c\,.
\end{equation*}
We are going to prove the equivalent statement
\begin{equation} \label{eq:comparison}
	\cP_{\gd, T} \big( \tau \cap [n-T^3, n-1] \ne \emptyset \big) \;\ge\;
	C \, \cP_{\gd, T} \big( \tau \cap [n-T^3, n-1] = \emptyset \big) \,,
\end{equation}
for a suitable $C > 0$. We have
\begin{equation} \label{eq:nonempty}
\begin{split}
	& \cP_{\gd, T} \big( \tau \cap [n-T^3, n-1] \ne \emptyset \big) \;=\;
	\sum_{\ell = 0}^{n-T^3 -1}
	\cP_{\gd, T} \big( \ell \in \tau \big) \, \sum_{k = n - T^3}^{n-1}
	\cP_{\gd, T} (\tau_1 = k - \ell) \\
	& \ge \; (const.) \, \sum_{\ell = 0}^{n-T^3 -1}
	\cP_{\gd, T} \big( \ell \in \tau \big) \,
	\left( e^{-(\phi(\gd,T)+g(T))(n-T^3-\ell)} - e^{-(\phi(\gd,T)+g(T))(n-\ell)}
	\right) \,,
\end{split}
\end{equation}
having applied \eqref{eq:boundrenbislb}. Analogously, applying
\eqref{eq:boundrenbisub} we get
\begin{equation} \label{eq:empty}
\begin{split}
	& \cP_{\gd, T} \big( \tau \cap [n-T^3, n-1] = \emptyset \big) \;=\;
	\sum_{\ell = 0}^{n-T^3 -1}
	\cP_{\gd, T} \big( \ell \in \tau \big) \, \sum_{k = n}^{\infty}
	\cP_{\gd, T} (\tau_1 = k - \ell) \\
	& \quad \; \le \; (const.) \, \sum_{\ell = 0}^{n-T^3 -1}
	\cP_{\gd, T} \big( \ell \in \tau \big) \,
	e^{-(\phi(\gd,T)+g(T))(n-\ell)} \,,
\end{split}
\end{equation}
having used the upper bound in \eqref{eq:boundq}.
However we have
\begin{equation*}
	\frac{e^{-(\phi(\gd,T)+g(T))(n-T^3-\ell)} - e^{-(\phi(\gd,T)+g(T))(n-\ell)}}
	{e^{-(\phi(\gd,T)+g(T))(n-\ell)}} \;=\;
	e^{T^3(\phi(\gd,T)+g(T))} -1 \;\xrightarrow{T\to\infty}\;
	e^{\frac{2 \pi^2}{(e^{-\gd}-1)}} - 1\,,
\end{equation*}
thanks to \eqref{eq:gplusphi}, so that \eqref{eq:comparison}
is proven.

It remains to prove the upper bound in \eqref{eq:bound_renewal}
for $n \ge T^3$. Notice first that
\begin{align*}
	& \cP_{\gd, T} \big(\tau \cap [n-T^3, n-T^2] \ne \emptyset \,,\,
	n \in \tau \big)
	\;=\; \sum_{k=n-T^3}^{n-T^2} \cP_{\gd, T} (\gamma_{n-T^3} = k)
	\, \cP_{\gd, T} (n-k \in \tau) \\
	& \quad \;\le\; \frac{(const.)}{T^3} \,
	\cP_{\gd, T} \big(\tau \cap [n-T^3, n-T^2] \ne \emptyset \big) \;\le\;
	\frac{(const.)}{T^3} \,,
\end{align*}
having applied the upper bound in \eqref{eq:bound_renewal}
to $\cP_{\gd, T} (n-k \in \tau)$, because
$T^2 \le n - k \le T^3$. If we now show that
there exist $c,T_0 > 0$ such that
for $T > T_0$ and for $n > T^3$
\begin{equation} \label{eq:babao}
	\cP_{\gd, T} \big( \tau \cap [n-T^3, n-T^2] \ne \emptyset
	\,\big|\, n \in \tau \big) \;\ge\; c\,,
\end{equation}
it will follow that
\begin{equation*}
	\cP_{\gd, T} ( n \in \tau ) \;\le\; \frac{1}{c} \,
	\cP_{\gd, T} \big( \tau \cap [n-T^3, n-T^2] \ne \emptyset\,,\,
	n \in \tau \big) \;\le\; \frac{(const.')}{T^3}\,,
\end{equation*}
and we are done.
Instead of \eqref{eq:babao}, we prove the equivalent relation
\begin{equation} \label{eq:babao2}
	\cP_{\gd, T} \big( \tau \cap [n-T^3, n-T^2] \ne \emptyset
	,\, n \in \tau \big) \;\ge\; C
	\cP_{\gd, T} \big( \tau \cap [n-T^3, n-T^2] = \emptyset
	,\, n \in \tau \big)\,,
\end{equation}
for some $C>0$. We start considering the l.h.s.:
\begin{equation} \label{eq:wehave}
\begin{split}
	& \cP_{\gd, T} \big( \tau \cap [n-T^3, n-T^2] \ne \emptyset
	,\, n \in \tau \big) \\
	& \;=\; \sum_{m=0}^{n-T^3 -1}
	\cP_{\gd, T} (m \in \tau) \, \sum_{\ell = n-T^3}^{n-T^2}
	\cP_{\gd, T} (\tau_1 = \ell-m) \, \cP_{\gd, T}
	(n-\ell \in \tau) \,.
\end{split}
\end{equation}
Notice that $\cP_{\gd, T} (n-\ell \in \tau) \ge (const.)/T^3$
for $n-\ell \in 2\N$ by the lower bound in \eqref{eq:bound_renewal}. Equation
\eqref{eq:boundrenbislb} then yields
\begin{equation*}
\begin{split}
	& \sum_{\ell = n-T^3}^{n-T^2} 	\cP_{\gd, T} (\tau_1 = \ell-m)
	\;\ge\; (const.) \,
	\left( e^{-(\phi(\gd,T)+g(T))(n-T^3-m)} - e^{-(\phi(\gd,T)+g(T))(n-T^2-m)}
	\right)\\
	& \;=\; (const.) \, e^{-(\phi(\gd,T)+g(T))(n-T^3-m)} \,
	(1- e^{-(\phi(\gd,T)+g(T))(T^3-T^2)}) \\
	& \;\ge\; (const.') \, e^{-(\phi(\gd,T)+g(T))(n-T^3-m)}
	\;\ge\; (const.'') \, e^{-(\phi(\gd,T)+g(T))(n-m)}\,,
\end{split}
\end{equation*}
having used repeatedly \eqref{eq:gplusphi}. Coming back
to \eqref{eq:wehave}, we obtain
\begin{equation} \label{eq:qquasi}
\begin{split}
	& \cP_{\gd, T} \big( \tau \cap [n-T^3, n-T^2] \ne \emptyset
	,\, n \in \tau \big) \\
	& \quad \;\ge\; \frac{(const.)}{T^3} \,\sum_{m=0}^{n-T^3 -1}
	\cP_{\gd, T} (m \in \tau) \, e^{-(\phi(\gd,T)+g(T))(n-m)} \,.
\end{split}
\end{equation}
Next we focus on the r.h.s. of \eqref{eq:babao2}:
\begin{equation} \label{eq:wehave2}
\begin{split}
	& \cP_{\gd, T} \big( \tau \cap [n-T^3, n-T^2] = \emptyset
	,\, n \in \tau \big) \\
	& \;=\; \sum_{m=0}^{n-T^3 -1}
	\cP_{\gd, T} (m \in \tau) \, \sum_{\ell = n-T^2}^{n}
	\cP_{\gd, T} (\tau_1 = \ell-m) \, \cP_{\gd, T}
	(n-\ell \in \tau) \,.
\end{split}
\end{equation}
Since $\ell - m \ge T^3 - T^2$, from the upper bound
in \eqref{eq:boundren} we get
\begin{equation*}
	\cP_{\gd, T} (\tau_1 = \ell-m) \;\le\; \frac{(const.)}{T^3}
	\, e^{-(\phi(\gd,T) + g(T))(\ell - m)} \;\le\; \frac{(const.')}{T^3}
	\, e^{-(\phi(\gd,T) + g(T))(n - m)} \,,
\end{equation*}
because $n - \ell \le T^2$ (recall \eqref{eq:gplusphi}).
Furthermore, by the upper bound
in \eqref{eq:bound_renewal} applied to $\cP_{\gd, T} (n-\ell \in \tau)$,
for $n - \ell \le T^2$, we obtain
\begin{equation*}
	\sum_{\ell = n-T^2}^{n} \cP_{\gd, T} (n-\ell \in \tau)
	\;\le\; (const.) \sum_{\ell = n-T^2}^{n} \, \frac{1}{(n-\ell)^{3/2}}
	\;\le\; (const.')\,,
\end{equation*}
and coming back to \eqref{eq:wehave2} we get
\begin{equation} \label{eq:qquasi2}
\begin{split}
	& \cP_{\gd, T} \big( \tau \cap [n-T^3, n-T^2] = \emptyset
	,\, n \in \tau \big) \\
	& \;\le\; \frac{(const.')}{T^3} \sum_{m=0}^{n-T^3 -1}
	\cP_{\gd, T} (m \in \tau) \, e^{-(\phi(\gd,T) + g(T))(n - m)}\,.
\end{split}
\end{equation}
Comparing \eqref{eq:qquasi} and \eqref{eq:qquasi2} we see that
\eqref{eq:babao2} is proven and this completes the proof.
\qed


\medskip


\bigskip


\begin{thebibliography}{99}

\bibitem{cf:Brak}
R. Brak, A.L. Owczarek, A. Rechnitzer and S.G. Whittington,
\textit{A directed walk model of a long chain polymer in a slit with attractive walls.},
J. Phys. A: Math. Gen. {\bf 38} (2005), 4309--4325.

\bibitem{cf:Bol} E. Bolthausen,
\textit{On a functional central limit theorem for random walks conditioned to stay positive},
Ann. Probab. {\bf 4} (1976), 480--485.

\bibitem{cf:CP} F. Caravenna and N. P\'etr\'elis,
\textit{A polymer in a multi-interface medium},
Ann. Appl. Probab. (to appear),
arXiv.org: 0712.3426 [math.PR].

\bibitem{cf:Fel1} W.~Feller,
\textit{An Introduction to Probability Theory and Its Applications},
Vol. I, Third edition, John Wiley \& Sons (1968).

\bibitem{cf:Gia} G.~Giacomin, \textit{Random polymer models},
Imperial College Press (2007), World Scientific.

\bibitem{cf:Martin} R. Martin, E. Orlandini, A. L. Owczarek, A. Rechnitzer
and S. Whittington,
\textit{A directed walk model of a long chain polymer in a slit with attractive walls},
J. Phys. A: Math. Gen. {\bf 40} (2007), 7509--7521.

\bibitem{cf:Ney} P. Ney,
\textit{A refinement of the coupling method in renewal theory},
Stochastic Process. Appl. {\bf 11} (1981), 11--26.

\bibitem{cf:Owczarek} A. L. Owczarek, T. Prellberg and A. Rechnitzer,
\textit{Finite-size scaling functions for directed polymers confined between attracting walls},
J. Phys. A: Math. Theor. {\bf 41} (2008), 1--16.

\end{thebibliography}
\end{document}